\numberwithin{equation}{section}
\ifpdf \usepackage[pdftex,pdfstartview=FitH,pdfpagemode=none,colorlinks,bookmarks,linkcolor=blue]{hyperref} \else  \usepackage[hypertex]{hyperref} \fi
\newtheorem{theorem}{Theorem}[section]
\newtheorem{proposition}[theorem]{Proposition}
\newtheorem{lemma}[theorem]{Lemma}
\theoremstyle{definition}
\newtheorem{definition}[theorem]{Definition}
\newtheorem{remark}[theorem]{Remark}
\def\mR {\mathbb {R}^n}
\newcommand{\dstyle}{\displaystyle}
\def\keywords#1{\par\medskip
\noindent\textbf{Keywords.} #1}
\begin{document}

\title[Isoperimetric inequality and Q-curvature]{Isoperimetric inequality, $Q$-curvature and $A_p$ weights}
\date{2013, April 23}
\author[Yi Wang]{Yi Wang}
\address{Yi Wang, Department of Mathematics, Stanford University, Stanford CA 94305, USA}
\address{ email: wangyi@math.stanford.edu}
\setcounter{page}{1}
\thanks{The research of the author is partially supported
 by NSF grant DMS-1205350}

\subjclass{Primary 52B60; Secondary 42B35}
\begin{abstract}
A well known question in differential geometry is to control the constant in isoperimetric inequality by intrinsic curvature conditions. In dimension 2, the constant can be controlled by the integral of the positive part of the Gaussian curvature. In this paper, we showed that on simply connected conformally flat manifolds of higher dimensions, the role of the Gaussian curvature can be replaced by the Branson's $Q$-curvature. We achieve this by exploring the relationship between $A_p$ weights and integrals of the $Q$-curvature.

\keywords{Isoperimetric inequality, $Q$-curvature, $A_p$ weights, strong $A_\infty$ weights}
\end{abstract}
\maketitle


\section{Introduction}\label{sect:intro}
The classical isoperimetric inequality on $\mathbb{R}^2$ states that for any bounded domain $\Omega\in \mathbb{R}^2$ with smooth boundary
$$ vol(\Omega)\leq \frac{1}{4\pi} Area(\partial \Omega)^2. $$
On a complete noncompact simply connected surface $M^2$, it is well-known that we have the Fiala-Huber \cite{Fiala}, \cite{Huber} isoperimetric inequality
\begin{equation}\label{FialaHuber}
vol(\Omega)\leq \frac{1}{2(2\pi-\int_{M^2}K_g^+ dv_g)} Area(\partial \Omega)^2,
\end{equation}
where $K_g^+$ is the positive part of the Gaussian curvature $K_g$. Also $\int_{M^2}K_g^+ dv_g< 2\pi$ is the sharp bound so that the isoperimetric inequality holds.

An important notion in conformal geometry is Branson's $Q$-curvature \cite{Branson} (called $Q$-curvature for short.)
In dimension 2, $Q_g=K_g/2$, and in dimension 4, $Q_g= \frac{1}{12}(-\Delta R_g+ \frac{1}{4}R_g^2- 3|E_g|^2)$, where $R_g$ denotes the scalar curvature and $E_g$ denotes the traceless part of the Ricci tensor.
However in general case $Q$-curvature remains a mysterious quantity that it is defined via analytic continuation in the dimension. (See the definition in section 2.) $Q$-curvature has conformal invariant properties analogous to the Gaussian curvature in dimension 2. There has been great progress in understanding it. For example see the work of \cite{FeffermanGraham}, \cite{GrahamZworski} on the study of the $Q$-curvature and ambient metrics; \cite{Spyros} on the structures of conformal invariants; and \cite{GurskyViaclovsky} on 4-manifolds that admit constant $Q$-curvature metrics.

Our study aims to understand another aspect of $Q$-curvature's geometric meaning--its relationship with isoperimetric inequality. In this paper, we prove the higher dimensional analogue of inequality (\ref{FialaHuber}) with $Q$-curvature bound.
\begin{theorem}\label{main}Let $(M^n,g)=(\mR, g= e^{2u}|dx|^2)$ be a complete noncompact even dimensional manifold.
Let $R_g$ denote the scalar curvature; $Q^+$ and $Q^-$ denote the
positive and negative part of $Q_g$ respectively; and $dv_g$ denote the volume form of $M$. Suppose $g= e^{2u}|dx|^2$ is a ``normal" metric, i.e.
\begin{equation}\label{normal}u(x)=
\displaystyle \frac{1}{c_n}\int_{\mR} \log \frac{|y|}{|x-y|} Q_{g}(y) dv_g(y) + C;
\end{equation}
for some constant $C$;
or instead, suppose $\liminf_{|x|\rightarrow \infty}R_g(x)\geq 0$.
If
\begin{equation}\label{assumption1}
\alpha:= \int_{M^n}Q^{+}dv_g < c_n
\end{equation}
where $c_n=2^{n-2}(\frac{n-2}{2})!\pi^{\frac{n}{2}}$,
and \begin{equation}\label{assumption2}
\beta:=\int_{M^n}Q^{-}dv_g < \infty,
\end{equation}
then $(M^n,g)$ satisfies the isoperimetric inequality with isoperimetric constant depending only on $n, \alpha$ and
$\beta$.
Namely, for any bounded domain $\Omega\subset M^n$ with smooth boundary,
\begin{equation}\label{1.89}
|\Omega|_g^{\frac{n-1}{n}}\leq C(n, \alpha,\beta) |\partial \Omega |_g.
\end{equation}
\end{theorem}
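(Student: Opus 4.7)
The plan is to reduce the geometric statement to a weighted inequality on Euclidean space and invoke the theory of strong $A_\infty$ weights. Since $g=e^{2u}|dx|^2$ with volume form $dv_g=e^{nu}\,dx$ and boundary element $d\sigma_g=e^{(n-1)u}\,d\sigma$, inequality (\ref{1.89}) is equivalent to
\[
\left(\int_\Omega e^{nu}\,dx\right)^{(n-1)/n}\le C\int_{\partial\Omega}e^{(n-1)u}\,d\sigma
\]
for Euclidean domains $\Omega$. By a theorem of Semmes, this Sobolev-type inequality holds as soon as $w:=e^{nu}$ is a strong $A_\infty$ weight on $\mathbb{R}^n$, with the isoperimetric constant depending only on the strong $A_\infty$ constants of $w$. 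Thus the whole task is to show that $w$ is strong $A_\infty$, with constants quantitatively controlled by $n$, $\alpha$, $\beta$. Under the alternative scalar curvature hypothesis $\liminf_{|x|\to\infty}R_g\ge 0$, I would first use a normalization lemma of Chang--Qing--Yang type to reduce to the case that $u$ is already given by the integral representation (\ref{normal}).

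Granted this normal form, I would split $Q_g=Q^+-Q^-$ and $u=u^+-u^-+C$, and aim to establish $w\in A_p$ for some $p=p(n,\alpha,\beta)$. The key estimate: for a Euclidean ball $B$, Jensen's inequality applied to the probability measure $Q^+\,dv_g/\alpha$ gives
\[
\frac{1}{|B|}\int_B e^{\gamma n u^+(x)}\,dx\le\frac{1}{|B|}\int_B\int\left(\frac{|y|}{|x-y|}\right)^{\gamma n\alpha/c_n}\frac{Q^+(y)\,dv_g(y)}{\alpha}\,dx,
\]
which, after exchanging the order of integration, is uniformly bounded in $B$ provided the exponent on $|x-y|^{-1}$ stays strictly below $n$, i.e.\ as long as $\gamma<c_n/\alpha$. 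The subcriticality $\alpha<c_n$ is exactly what leaves room to choose $\gamma>1$, yielding a reverse Hölder inequality for $e^{nu^+}$; a symmetric computation controls $e^{-nu^+/(p-1)}$ once $p$ is taken large enough. The contribution from $u^-$ is easier, because $\beta<\infty$ together with the same Jensen argument controls both $e^{nu^-}$ and $e^{-nu^-}$ at all scales without any smallness threshold.

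Combining these estimates and using that $A_p$ is stable under products yields $w\in A_p$ with constants depending only on $n,\alpha,\beta$. Upgrading $A_p$ to \emph{strong} $A_\infty$ requires verifying the quasi-metric compatibility of David--Semmes, which in this conformally flat setting amounts to comparing the $w$-averages on neighbouring balls of comparable radii; this again follows from the potential representation and the uniform bounds already in hand. Applying Semmes' isoperimetric inequality for strong $A_\infty$ weights then delivers (\ref{1.89}) with the stated dependence of the constant. The main obstacle is quantitative: the Jensen computation must be tracked carefully so that the borderline $\alpha=c_n$ is exactly where the argument breaks, and the interaction between $u^+$ and $u^-$ at all scales has to be handled using only the scalar quantities $\alpha$ and $\beta$, with no auxiliary information about $u$ itself.
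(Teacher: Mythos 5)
Your high-level strategy is correct and matches the paper: reduce (\ref{1.89}) to showing that $w=e^{nu}$ is a strong $A_\infty$ weight, invoke the David--Semmes Sobolev inequality, and get the sharpness of $\alpha<c_n$ out of a Jensen-type estimate showing $e^{nu_+}$ is $A_1$ (the subcriticality of the exponent $n\alpha/c_n<n$ is indeed exactly what makes the averaged kernel locally integrable). But two central points in your treatment of the rest are wrong or missing, and they are precisely where the work in the paper is.

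First, your claim that ``the contribution from $u^-$ is easier, because $\beta<\infty$ together with the same Jensen argument controls both $e^{nu^-}$ and $e^{-nu^-}$ at all scales without any smallness threshold'' is not correct. Since $\beta$ is only assumed finite, not small, the weight $e^{nu_-}$ (in the paper's notation) can behave like $|x-x_0|^{n\beta/c_n}$ with $n\beta/c_n$ arbitrarily large. Such a weight is not $A_1$, is only $A_p$ for $p$ large depending on $\beta$, and — most importantly — the strong $A_\infty$ \emph{lower} bound $\delta_\omega(x,y)\le Cd_\omega(x,y)$ does not follow from any $A_p$ estimate. The negative part is therefore the harder piece, and the paper spends Section 3 proving $e^{nu_-}$ is strong $A_\infty$ by a genuinely different mechanism: a Cartan-type estimate isolating an exceptional set $E_\epsilon$ of small $\mathcal H^1$-content on which the logarithmic potential of $Q^-e^{nu}$ is large, followed by a projection argument showing any path $\gamma$ from $x$ to $y$ inside $B_{xy}$ must spend definite length outside $E_\epsilon$. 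This is what produces the lower bound $\int_\gamma \omega^{1/n}\gtrsim\delta_\omega(x,y)$; a purely average-based Jensen computation cannot produce it, because averages say nothing about the infimum over paths.

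Second, your description of the $A_p\to$ strong $A_\infty$ upgrade as ``comparing $w$-averages on neighbouring balls of comparable radii'' is describing doubling, which every $A_\infty$ weight already has, and which is equivalent to the easy direction $d_\omega\le C\delta_\omega$. The hard direction is the path infimum estimate above, and it is not a consequence of $A_p$: the example $|x_1|^\alpha$, $\alpha>0$, given in the paper is $A_\infty$ but not strong $A_\infty$, exactly because a path can run along the hyperplane $x_1=0$. The paper combines the pieces via a specific lemma of Semmes (Lemma~\ref{5.1}): if $\omega_1$ is $A_1$, $\omega_2$ is strong $A_\infty$, and $\omega_1\omega_2$ is $A_\infty$, then $\omega_1\omega_2$ is strong $A_\infty$. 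One still needs $e^{nu}\in A_p$ (Proposition~\ref{5.2}), and there the paper does not use a generic ``$A_p$ is stable under products'' fact — which is false in general — but a targeted computation writing $e^{nu}=e^{nu_+}\cdot(e^{-\epsilon n u_-})^{-1/\epsilon}$ with both factors $A_1$ for $\epsilon$ small. To complete your argument you would need to supply (i) the Cartan-lemma path-length estimate for $e^{nu_-}$, (ii) the Semmes factorization lemma, and (iii) the product $A_p$ argument; none of these is implied by what you have written.
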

We remark that the constant $c_n$ in the assumption (\ref{assumption1}) is sharp. In fact, $c_n$ is equal to the integral of the $Q$-curvature
on a half cylinder (a cylinder with a round cap attached to one of its two ends); but obviously a half cylinder fails to satisfy the isoperimetric inequality. We also remark that being a normal metric is a natural and necessary assumption. We will give explanations of this remark in section 5 (Remark \ref{remarknormal}).

When one considers the isoperimetric inequality on higher dimensional manifolds, a natural question is to find suitable substitute of the Gaussian curvature. Many results \cite{Aubin}, \cite{Cantor}, \cite{Varopoulos}, \cite{Saloff} are obtained on proving the isoperimetric inequality with pointwise sectional curvature or Ricci curvature assumptions. Also a well-known conjecture asserts the validity of the Euclidean
isoperimetric inequality on complete simply connected manifolds with nonpositive sectional
curvature. This conjecture was proved in dimension 2 by Weil \cite{Weil}, in dimension 3
by Kleiner \cite{Kleiner}, and in dimension 4 by Croke \cite{Croke}; but it is still open for higher dimensions.

Instead of assuming the pointwise bound of some curvature, we ask a different and natural question that if there is a suitable curvature quantity, whose integral bound, controls the isoperimetric constant. In other words, we want to prove an inequality of type (\ref{FialaHuber}) in higher dimensions.
In the mean time, we know the conformal structure is always available on surfaces. Hence we ask the question that if the conformal structure is relevant in proving the isoperimetric inequality of type (\ref{FialaHuber}) on higher dimensions.
The main theorem of this paper answers both these questions.
We assert that the integral of the $Q$-curvature is the suitable curvature quantity to control the isoperimetric constant;
and the conformal structure is a key structure to look at in understanding the problem.

In the work of Chang, Qing and Yang \cite{CQY1} \cite{CQY2}, it was discussed that if the metric is ``normal" (as defined by (\ref{normal})) and $\int_{\mR} |Q_g|dv_g\leq \infty$, then the isoperimetric profile for very big balls is captured by the integral of the $Q$-curvature. This generalizes
the work of Cohn-Vossen \cite{Cohn-Vossen} and Huber \cite{Huber} for surfaces. More precisely, the relation is that
\begin{equation}\label{1.4}
\displaystyle \chi(\mathbb{R}^4)-\frac{1}{4\pi^2}\int_{\mathbb{R}^4} Q_g dv_g= \lim_{r\rightarrow
\infty }\frac{Area_g(\partial B(r))^{4/3}}{4(2\pi^2)^{1/3}Vol_g(B(r))},
\end{equation}
\noindent where $B(r)$ is the Euclidean ball with radius $r$.


Previous work was done by Bonk, Heinonen and Saksman \cite{BHS2}. They showed that if the metric is ``normal", and if in addition $\int_{\mR} |Q_g|dv_g\leq \epsilon_0$ for some small $\epsilon_0<<1$, the manifold is bi-Lipschitz to the Euclidean space, which in particular implies the isoperimetric inequality.
Also, in my previous work \cite{Wang}, we proved the isoperimetric inequality
with weaker assumptions than (\ref{assumption1}) and (\ref{assumption2}).
But the isoperimetric constant there does not only depend on $\alpha$ and $\beta$.

The main difficulty in the problem is to find analytical tools to characterize
different roles of the positive and negative part of the $Q$-curvature.
In this article, we adopt a very different method--the main proof relies on the theory of $A_p$ weights,
which is an important notion in harmonic analysis and potential theory.
Inspired by Peter Jones' result \cite{Jones} on the decomposition of $A_\infty$ weights; in particular, the idea of dyadic decomposition of BMO functions, we apply the theory of $A_p$ weights to handle the difficulty in this geometry problem.
Conceptually, the observation is that there is parallel structure between the geometric obstruction of having isoperimetric inequality with the analytic obstruction of being in suitable classes of $A_p$ weights.
We prove that the volume form $e^{nu}$ is a strong $A_\infty$ weight, and thus by a classical result of Guy David and Stephen Semmes \cite{DS1}, (see Theorem \ref{2.1} below), this implies the isoperimetric inequality is valid.


The paper will be organized as follows. In section 2, we present preliminaries on the $Q$-curvature and $A_p$ weights.
We then decompose the volume form $e^{nu}$ into two parts: $e^{nu_+}$ and $e^{nu_-}$ (see Definition \ref{defu+-}), and discuss them respectively in section 3 and 4.
In section 5 that we put these pieces together;
and show that $e^{nu}$ is a strong $A_\infty$ weight and finish the proof.


{\bf Acknowledgments:} The author would like to thank Alice Chang and Paul Yang for inspiring suggestions and discussions of this problem. She is very grateful to Matt Gursky for suggestions and comments. The author would also like to thank Mario Bonk for his continuous interest and encouragements on the project, and David Guy for valuable discussions.

\section{preliminaries}
\subsection{$Q$-curvature in conformal geometry}
In past decades, there are many works focusing on the study of the $Q$-curvature equation and the associated
conformal covariant operators, both from PDE point of view and from the geometry point of view.
We now discuss some background of it in conformal geometry.
Consider a 4-manifold $(M^4,g)$, the Branson's $Q$-curvature of $g$
is defined as
$$Q_g:=\frac{1}{12}\left\{-\Delta R_g +\frac{1}{4}R_g^2 -3|E|^2 ,\right\}
$$
where $R_g$ is the scalar curvature, $E_g$ is the traceless part of $Ric_g$, and $|\cdot|$ is taken with respect to the metric $g$.
It is well known that the $Q$-curvature is an integral conformal invariant associated to
the fourth order Paneitz operator $P_g$
$$P_g:=\Delta^2+\delta(\frac{2}{3}R_g g-2 Ric_g)d.$$
Under the conformal change $g_{u}=e^{2u}g_0$, $P_{g_u}=e^{-4u}P_{g_0}$,
$Q_{g_u}$ satisfies the fourth order differential equation,
\begin{equation}\label{1.88}P_{g_{0}}u+2Q_{g_0}=2Q_{g_{u}}e^{4u}.\end{equation}
This is analogous to the Gaussian curvature equation on surfaces
$$-\Delta_{g_0} u+ K_{g_0}  = K_{g_u} e^{2u}. $$
One particular situation is when the background metric $g_0=|dx|^2$. In this case, the equation (\ref{1.88}) reduces to
$$(-\Delta)^2 u= 2Q_{g_{u}}e^{4u}.$$
The invariance of the $Q$-curvature in dimension 4 is due to the Chern-Gauss-Bonnet formula for closed manifold $M^4$:
$$\chi(M^4)=\dstyle\frac{1}{4\pi^2} \int_{M^4}\left(\frac{|W_g|^2}{8}+Q_g\right) dv_{M},$$
where $W_g$ denotes the Weyl tensor.

For higher dimensions, the $Q$-curvature is defined via the analytic continuation in the dimension and the formula is not explicit in general.
However when the background metric is flat, it satisfies, under the conformal change of metric $g_{u}=e^{2u}|dx|^2$, the $n$-th order differential equation
$$(-\Delta)^{\frac{n}{2}} u= 2Q_{g_{u}}e^{nu}.$$
We will only use this property of $Q$-curvature in this paper.
\subsection{$A_p$ weights and Strong $A_\infty$ weights}
In this subsection, we are going to present the definitions and the properties of $A_p$ weights and strong $A_\infty$ weights.

In harmonic analysis, $A_p$ weights ($p\geq 1$) are introduced to characterize when a function $\omega$ could be a weight such that the associated measure $\omega(x)dx$
has the property that the maximal function $\textrm{M}$ of an $L^1$ function is weakly $L^1$, and that the maximal
function of an $L^p$ function is $L^p$ if $p>1$.

For a nonnegative locally integrable function $\omega$, we call it an $A_p$ weight $p>1$, if
\begin{equation}\label{Ap}
\frac{1}{|B|}\int_{B}\omega(x)dx\cdot \left(\frac{1}{|B|}\int_{B}\omega(x)^{-{p'}/{p}}dx\right)^{{p}/{p'}}\leq C<\infty,
\end{equation}
for all balls $B$ in $\mathbb{R}^n$. Here $p'$ is conjugate to $p$: $\frac{1}{p'}+\frac{1}{p}=1$. The constant $C$ is uniform for all $B$ and we call the smallest such constant $C$ the $A_p$ bound of $\omega$.
The definition of $A_1$ weight is given by taking limit of $p\rightarrow 1$ in (\ref{Ap}),
which gives
$$\dstyle \frac{1}{|B|} \int_{B}\omega dx \leq C \omega, $$
for almost all $x\in \mathbb{R}^n$.
Thus it is equivalent to say the maximal function of the weight is bounded by the weight itself:
$$\textsl{M}\omega(x)\leq C' \omega(x),$$
for a uniform constant $C'$.
Another extreme case is the $A_\infty$ weight. $\omega$ is called an $A_\infty$ weight if it is an $A_p$ weight for some $p>1$.
It is not difficult to see $A_1\subseteq A_p\subseteq A_p'\subseteq A_\infty$ when $1\leq p\leq p'\leq \infty$.

One of the most fundamental property of $A_p$ weight is the reverse H\"{o}lder inequality:
if $\omega$ is $A_p$ weight for some $p\geq 1$, then there exists an $r>1$ and a $C>0$, such that
\begin{equation}\label{reverse holder}
\dstyle \left( \dstyle \frac{1}{|B|} \int_{B}\omega^r dx \right)^{1/r}\leq \frac{C}{|B|} \dstyle \int_{B}\omega dx,
\end{equation}
for all balls $B$.
This would imply that any $A_p$ weight $\omega$ satisfies the doubling property:
there is a $C>0$ (it might be different from the $C$ in (\ref{reverse holder})), such that $$\dstyle \int_{B(x_0, 2r)} \omega(x) dx\leq C \int_{B(x_0, r)} \omega(x) dx$$
for all balls $B(x_0, r)\subset\mR$.

Suppose $\omega_1$ and $\omega_2$ are $A_1$ weights, and let $t$ be any positive real number. Then it is not hard to show $\omega_1\omega_2^{-t}$ is an $A_\infty$ weight.
Conversely, the factorization theorem of $A_\infty$ weight proved by Peter Jones \cite{Jones}: if $\omega$ is an $A_\infty$ weight, then there exist $\omega_1$ and $\omega_2$, both are $A_1$; and $t>1$ such that $\omega=\omega_1\omega_2^{-t}.$ Later, in the proof of the main theorem, we will decompose the volume form $e^{nu}$ into two pieces. The idea to decompose $e^{nu}$ is initially inspired by Peter Jones' factorization theorem.

Besides the standard $A_p$ weights, the notion of strong $A_\infty$ weight was first proposed by David and Semmes in \cite{DS1}.
Given a positive continuous weight $\omega$, we define $\delta_\omega(x,y)$ to be:
\begin{equation}\label{def1}
\delta_\omega(x,y):=\left(\int_{B_{x,y}}\omega(z)dz\right)^{1/n},
\end{equation}
where $B_{x,y}$ is the ball with diameter $|x-y|$ that contains $x$ and $y$.
One can prove that $\delta_\omega$ is only a quasi-distance in the sense that it satisfies the quasi-triangle inequality
$$\delta_\omega(x,y)\leq C(\delta_\omega(x,z)+ \delta_\omega(z,y)).$$
\noindent On the other hand, by taking infimum over all rectifiable arc $\gamma\subset B_{xy}$ connecting $x$ and $y$, one can define the $\omega$-distance to be
\begin{equation}\label{def2}d_\omega(x,y):=\inf_{\gamma}\int_\gamma \omega^{\frac{1}{n}}(s)|ds|.\end{equation}

If $\omega$ is an $A_\infty$ weight, then it is easy to prove (see for example Proposition 3.12 in \cite{S2})
\begin{equation}\label{A}
d_\omega(x,y)\leq C\delta_\omega(x,y)
\end{equation}
for all $x,y\in \mathbb{R}^n$.
If in addition to the above inequality, $\omega$ also satisfies the reverse inequality, i.e.
\begin{equation}\label{strong A}
\delta_\omega(x,y)\leq Cd_\omega(x,y),
\end{equation}
for all $x,y\in \mathbb{R}^n$, then we say $\omega$ is a strong $A_\infty$ weight.


Every $A_1$ weight is a strong $A_\infty$ weight, but for any $p>1$ there is an $A_p$ weight which is not strong $A_\infty$. Conversely, for
any $p>1$ there is a strong $A_\infty$ weight which is not $A_p$. It is easy to verify by definition the function $|x|^{\alpha}$ is $A_1$ thus strong $A_\infty$ if $-n<\alpha\leq 0$; it is not $A_1$ but still strong $A_\infty$ if $\alpha>0$.
And $|x_1|^\alpha$ is not strong $A_\infty$ for any $\alpha>0$ as one can choose a curve $\gamma$ contained in the
$x_2$-axis.

The notion of strong $A_\infty$ weight was initially introduced in order to study weights that are comparable to the Jacobian of quasi-conformal maps.
It was proved by Gehring that the Jacobian of a quasiconformal map on $\mR$ is always a strong $A_\infty$ weight, and it was conjectured that the converse was assertive: every strong $A_\infty$ weight is comparable to the Jacobian
of a quasi-conformal map. Later, however, counter-examples were found by Semmes \cite{S3} in dimension $n\geq 3$, and by Laakso \cite{La} in dimension 2.
Nevertheless, it was proved by David and Semme that a strong $A_\infty$ weight satisfies the Sobolev inequality:
\begin{theorem}\label{2.1}\cite{DS1}Let $\omega$ be a strong $A_\infty$ weight. Then for $f\in C^{\infty}_0(\mR)$,
\begin{equation}\dstyle \left( \int_{\mathbb{R}^n}|f(x)|^{p^*}\omega(x)dx\right)^{1/{p^*}}\leq C \dstyle \left(\int_{\mathbb{R}^n}
(\omega^{-\frac{1}{n}}(x)|\nabla f(x)|)^p\omega(x)dx \right)^{1/p},
\end{equation}
where $1\leq p<n$, $p^*=\frac{np}{n-p}$. Take $p=1$, it is the standard isoperimetric inequality.
\end{theorem}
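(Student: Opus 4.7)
The plan is to recognize the weight $\omega$ as defining a metric measure space $(\mathbb{R}^n, d_\omega, \mu)$ with $d\mu = \omega(x)\, dx$, and to transport the Euclidean Sobolev inequality into this setting via the standard machinery of analysis on doubling spaces. The strong $A_\infty$ hypothesis, combining \ref{A} and \ref{strong A}, gives $\delta_\omega(x,y) \asymp d_\omega(x,y)$, which together with the $A_\infty$ doubling property yields Ahlfors $n$-regularity:
$$\mu\bigl(B_{d_\omega}(x,r)\bigr) \asymp r^n$$
for all $x\in\mathbb{R}^n$ and $r>0$. Moreover, for any $f\in C^\infty_0(\mathbb{R}^n)$ and any rectifiable curve $\gamma$ parametrized by Euclidean arclength,
$$|f(x)-f(y)| \leq \int_\gamma |\nabla f|\, ds = \int_\gamma \bigl(\omega^{-1/n}|\nabla f|\bigr)\,\omega^{1/n}\,ds,$$
so $g:=\omega^{-1/n}|\nabla f|$ serves as an upper gradient of $f$ with respect to $d_\omega$.

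Next I would establish a weak $(1,1)$-Poincar\'e inequality on $(\mathbb{R}^n, d_\omega, \mu)$: for every $d_\omega$-ball $B$ of radius $r$,
$$\frac{1}{\mu(B)}\int_B |f-f_{B,\mu}|\,d\mu \leq C\, r\, \frac{1}{\mu(\lambda B)}\int_{\lambda B} g\, d\mu,$$
with a fixed dilation constant $\lambda\geq 1$. The core of this step is a chaining argument: any two points in $B$ can be joined by a curve whose $d_\omega$-length is comparable to their $d_\omega$-distance, and integrating the upper-gradient bound along such curves, then averaging over pairs $(x,y)\in B\times B$ against $\mu$, produces the inequality. This is precisely where the full strength of the strong $A_\infty$ condition is required: plain $A_\infty$ produces only the quasi-metric $\delta_\omega$, while strong $A_\infty$ furnishes enough rectifiable curves realizing short $d_\omega$-distances to run the chaining.

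With Ahlfors $n$-regularity and the $(1,1)$-Poincar\'e inequality in place, the Haj\l{}asz--Koskela theorem (equivalently, a direct Riesz-potential plus fractional-maximal-function argument in the $d_\omega$ metric) yields
$$\left(\int |f|^{p^*}\, d\mu\right)^{1/p^*} \leq C \left(\int g^p\, d\mu\right)^{1/p}$$
for $1\leq p<n$, which upon substituting $g=\omega^{-1/n}|\nabla f|$ is exactly the claimed inequality. The endpoint $p=1$---the isoperimetric inequality proper---follows by applying the bound to smooth approximations of $\chi_\Omega$ and passing to the limit. The main obstacle, as indicated, is verifying the $(1,1)$-Poincar\'e inequality; the other pieces are fairly routine consequences of doubling and Ahlfors regularity.
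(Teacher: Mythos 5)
The paper itself offers no proof of Theorem \ref{2.1}: it is quoted verbatim as a result of David and Semmes \cite{DS1}, so there is no internal argument to compare against. Your framework is a recognizable modern route to the statement---treat $(\mathbb{R}^n,d_\omega,\omega\,dx)$ as an Ahlfors $n$-regular metric measure space, observe that $\omega^{-1/n}|\nabla f|$ is an upper gradient for $d_\omega$, prove a $(1,1)$-Poincar\'e inequality, then invoke the Sobolev embedding for PI spaces. The peripheral pieces are fine: Ahlfors $n$-regularity does follow from $d_\omega\asymp\delta_\omega$ together with doubling (after checking that $d_\omega$-balls are comparable to Euclidean balls), and the upper-gradient identity is immediate.

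The gap is exactly at the step you label the crux. Strong $A_\infty$ supplies, for each pair $(x,y)$, a \emph{single} rectifiable curve of $d_\omega$-length $\lesssim d_\omega(x,y)$. But ``integrating the upper-gradient bound along such curves and averaging over pairs $(x,y)\in B\times B$'' is not, by itself, a proof of Poincar\'e: a one-curve-per-pair family can concentrate on a thin set, and then the Fubini/duality step produces nothing usable. What one actually needs is a quantitatively transversal \emph{pencil} of curves whose spreading is controlled well enough that the averaged kernel is an integrable Riesz-type kernel in the $d_\omega$ metric. Constructing such a pencil from the strong $A_\infty$ hypothesis is the real content of the theorem---it is precisely what fails for $\omega=|x_1|^\alpha$, which is $A_\infty$ but not strong $A_\infty$ because curves can slip through the degenerate hyperplane, and Semmes's actual argument (see \cite{S2}, \cite{S3}) does considerably more work than joining $x$ to $y$ by a short curve and averaging. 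So while your plan names the right obstacle and the surrounding machinery is correct, the decisive step is asserted rather than carried out, and as written the chaining sketch does not establish the Poincar\'e inequality.
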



By taking $f$ to be a smooth approximation of the indicator function of domain $\Omega$, this implies the validity of the isoperimetric inequality with respect to the weight $\omega$.
In this paper, we will take $\omega= e^{nu}$, the volume form of $(\mR,e^{2u}|dx|^2)$. We aim to show $e^{nu}$ is a strong $A_\infty$ weight. By Theorem \ref{2.1}, this implies the isoperimetric inequality on $(\mR,e^{2u}|dx|^2)$:
$$ \dstyle (\int_{\Omega} e^{nu(x)}dx)^{\frac{n-1}{n}}\leq C \int_{\partial \Omega} e^{(n-1)u(x)}d\sigma_x,$$
or equivalently, for $g=e^{2u}|dx|^2$,
$$|\Omega|_g^{\frac{n-1}{n}}\leq C |\partial \Omega|_g.$$

A good reference for $A_p$ weights is Chapter 5 in \cite{Stein}.
For more details on strong $A_\infty$ weight, we refer the readers to \cite{DS1}, where the concept was initially proposed.

\section{Analysis on the negative part of the $Q$-curvature}
We first remark that since $Q_g(y) e^{nu(y)}$ is integrable, $ \log \frac{|y|}{|x-y|} Q_g(y) e^{nu(y)}$ is also integrable in $y$ for each fixed $x\in \mR$. In fact, for a fixed $x$, the integral over the domain $|y|>>|x|$ is finite because $\log \frac{|y|}{|x-y|}$ is bounded and $Q_g(y) e^{nu(y)}$ is absolutely integrable by assumption (\ref{assumption1}) and (\ref{assumption2}); on the other hand, since the $Q$-curvature is smooth, and thus locally bounded, the integral over $B(x,1)$ is finite as well. Later in the paper, we will replace $Q_g(y)$ by either the positive or the negative part of it. The integral
$ \log \frac{|y|}{|x-y|} Q^{\pm}(y) e^{nu(y)}$ is still integrable for each fixed $x$. We will not repeat this point in the following sections.

To begin with, let us decompose $u=u_+ + u_-$, which are defined in the following.
\begin{definition}\label{defu+-}
 \begin{equation}
u_-(x):=\displaystyle \frac{-1}{c_n}\int_{\mR} \log \frac{|y|}{|x-y|} Q^-(y)e^{nu(y)} dy,
\end{equation}
and
 \begin{equation}
u_+(x):=\displaystyle \frac{1}{c_n}\int_{\mR} \log \frac{|y|}{|x-y|} Q^+(y)e^{nu(y)} dy.
\end{equation}
\end{definition}
In this section, we consider the negative part of the $Q$-curvature.
By (\ref{assumption2}), $ \beta:= \int_{\mR}Q^-(y)e^{nu(y)} dy<\infty$.
We recall the definitions (\ref{def1}), and (\ref{def2}), for a nonnegative function $\omega(x)$,
$$d_{\omega}(B_{xy}):= (\int_{B_{xy} } \omega(z)dz )^{\frac{1}{n}},
$$
$$\delta_{\omega}(x,y):=\inf_{\gamma} \displaystyle \int_{\gamma}\omega^{\frac{1}{n}}(\gamma(s))ds,
$$
where $B_{xy}$ is the ball with diameter $|x-y|$ that contains $x$ and $y$,
the infimum is taken over all curves $\gamma\subset B_{xy}$ connecting $x$ and $y$, and $ds$ is the arc length.

\begin{theorem}\label{Ainfty}$\omega^-(x):=e^{nu_-}$ is a strong $A_\infty$ weight,
i.e. there exists a constant $C=C(n,\beta)$ such that
\begin{equation}\label{Ainfty2}
\frac{1}{C(n,\beta)} d_{\omega^{-}}(B_{xy})
\leq \delta_{\omega^-}(x,y)\leq C(n,\beta) d_{\omega^{-}}(B_{xy}).
\end{equation}
\end{theorem}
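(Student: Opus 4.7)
The plan is to prove both inequalities in \eqref{Ainfty2}. The direction $d_{\omega^-}(B_{xy}) \leq C\,\delta_{\omega^-}(x,y)$ is immediate from \eqref{A} once $e^{nu_-}$ is shown to be an ordinary $A_\infty$ weight, which I would deduce from the BMO estimate $\|u_-\|_{\mathrm{BMO}} \lesssim \beta$ (standard for logarithmic potentials of finite measures). The substantive content is the reverse inequality $\delta_{\omega^-}(x,y) \leq C\,d_{\omega^-}(B_{xy})$.

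The starting observation is that $u_-$ is, modulo an additive constant, the logarithmic potential of the positive finite measure $\mu := Q^{-}e^{nu}\,dw$ with total mass $\beta$. Writing $\tilde\mu := \mu/\beta$ and $\sigma := \beta/c_n$, the identity
\begin{equation*}
e^{u_-(z)} = C_0 \exp\!\Bigl(\sigma\!\int \log|z-w|\,d\tilde\mu(w)\Bigr)
\end{equation*}
exhibits $e^{u_-}$ as a geometric mean of the positive-power weights $|z-w|^{\sigma}$, each of which is itself strong $A_\infty$ with constants depending only on $n$ and $\sigma$. The task is to show that this comparison survives the $\tilde\mu$-averaging. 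I would then take $\gamma$ to be the straight segment from $x$ to $y$ and seek to compare $\int_\gamma e^{u_-}\,ds$ with $(\int_{B_{xy}} e^{nu_-}\,dz)^{1/n}$ by decomposing $\mu$ according to how its support sits relative to $B_{xy}$.

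The natural first attempt---apply Jensen's inequality to replace the geometric mean by an arithmetic one and then use Fubini---is lossy: the pointwise bound $e^{u_-(z)} \leq C_0 \int|z-w|^\sigma\,d\tilde\mu(w)$ can grossly overestimate the true value when $\mu$ spreads its mass over disparate scales, since the arithmetic mean is dominated by the farthest masses while the geometric mean is dominated by the nearest ones. Matching this loss on the ball side is the main obstacle. Here I would invoke Peter Jones's dyadic decomposition of BMO functions, as flagged in the introduction: partitioning $\mu$ dyadically with respect to $B_{xy}$ reduces the estimate to a sum over scales, and on each scale the contribution of $\mu$ to $u_-$ behaves essentially like a single point mass, for which the single-point strong $A_\infty$ property of $|z-w|^{n\sigma}$ applies and can be summed up to produce the desired $\delta_{\omega^-}(x,y) \leq C(n,\beta)\,d_{\omega^-}(B_{xy})$.
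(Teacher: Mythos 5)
Your identification of the two directions has a mix-up with the paper's (admittedly inconsistent) notation: in the ``recall'' block at the start of Section~3, $d_{\omega}(B_{xy})$ denotes the ball quantity $\bigl(\int_{B_{xy}}\omega\bigr)^{1/n}$ and $\delta_{\omega}(x,y)$ denotes the infimum of $\int_\gamma\omega^{1/n}\,ds$ over curves. The direction that follows from $e^{nu_-}$ being an $A_p$ weight (via reverse H\"older) is $\delta_{\omega^-}(x,y)\le C\,d_{\omega^-}(B_{xy})$, i.e.\ (path infimum) $\le$ (ball). The substantive direction, which encodes strong $A_\infty$, is (ball) $\le$ (path infimum).

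The genuine gap in the proposal is that you propose to take $\gamma$ to be the \emph{straight segment} from $x$ to $y$. That suffices only when you need to bound the infimum from \emph{above} by a ball quantity, which is the easy direction. For strong $A_\infty$ you must show that the ball quantity is bounded by $\int_\gamma e^{u_-}\,ds$ for \emph{every} admissible curve $\gamma$, because one is lower-bounding an infimum. The danger is precisely that $u_-$ has logarithmic singularities (tending to $-\infty$, so $e^{u_-}\to 0$) wherever $Q^-e^{nu}\,dy$ concentrates; a clever curve could thread through many of these singularities and make $\int_\gamma e^{u_-}\,ds$ far smaller than the straight-line value. The paper handles this with Cartan's lemma (Lemma~\ref{Hauss1measure}): the set $E_\epsilon$ where the local potential $\hat u_1$ is larger than $C_0\beta/\epsilon$ in absolute value has $1$-Hausdorff content $<10\epsilon$, and a projection-onto-the-segment argument (the Claim with estimate (\ref{arclength-est})) shows that, for $\epsilon\le 1/20$, every curve $\gamma\subset B_{xy}$ joining $x$ and $y$ must spend length at least $3/2$ outside $E_\epsilon$. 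This is what defeats curves that try to ``hide'' in the singular set, and it is the step your proposal never supplies.

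On the remaining content: your observation that $e^{u_-}$ is a geometric mean of the strong $A_\infty$ weights $|z-w|^{\sigma}$ is correct, but strong $A_\infty$ is not a priori closed under geometric means with respect to the parameter measure, so this identity does not by itself give the conclusion. The sketch based on dyadic decomposition---treating each scale's contribution to $\mu$ as a point mass and ``summing up''---does not engage with the curve problem above: it is exactly the multi-scale structure that lets a curve accumulate savings across scales, and single-scale strong $A_\infty$ bounds do not compose in an obvious way. The ball-side upper bound via Jensen is in fact the route the paper does take (Proposition~\ref{3.8}); the ``loss'' you worry about there is real but controlled: Jensen produces a factor $12^{n\beta_{10}/c_n}$ and similar, which is acceptable because it depends only on $n$ and $\beta$. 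The essential missing ingredient in your proposal is the quantitative control of the exceptional set via $\mathcal H^1$-content and the projection argument, and without it the argument does not close.
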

We first observe that without generality we can assume $|x-y|=2$. This is because we can dilate $u$ by a factor $\lambda>0$,
\begin{equation}
\begin{split}
u^\lambda(x):= u(\lambda x)=&
\displaystyle \frac{-1}{c_n}\int_{\mR}\log \frac{|y|}{|\lambda x- y|} Q^-(y)e^{nu( y)} dy.\\
\end{split}
\end{equation}
By the change of variable, this is equal to
$$\displaystyle \frac{-1}{c_n}\int_{\mR}\log \frac{|y|}{|x-y|} Q^-(\lambda y)e^{nu(\lambda y)}\lambda^n dy.$$
Notice $Q^-(\lambda y)e^{nu(\lambda y)}\lambda^n$ is still an integrable function on $\mR$, with integral equal to $\beta$. Thus by choosing $\lambda =\frac{2}{|x-y|}$, the problem reduces to proving inequality (\ref{Ainfty2}) for $u^\lambda $ and $|x-y|=2$.


Let us denote the midpoint of $x$ and $y$ by $p_0$. And from now on, we adopt the notation
$\lambda B:=B (p_0,\lambda )$. Since $|x-y|=2$, we have $B_{xy}= B(p_0, 1)= B$.
We also define
\begin{equation}
u_1:=\displaystyle \frac{-1}{c_n}\int_{10B}\log \frac{|y|}{|x-y|} Q^-(y)e^{nu(y)} dy,
\end{equation}
and
\begin{equation}
u_2:=\displaystyle \frac{-1}{c_n}\int_{\mR\setminus 10 B} \log \frac{|y|}{|x-y|} Q^-(y)e^{nu(y)} dy.
\end{equation}

In the following lemma, we prove that when $z$ is close to $p_0$, the difference between $u_2(z)$ and $u_{2}(p_0)$ is controlled by $\beta$.
\begin{lemma}\label{claim1}
\begin{equation}\label{3.1}
|u_2(z)-u_{2}(p_0)|\leq \frac{ \beta}{4c_n}
\end{equation}
for $z\in 2 B $.
\end{lemma}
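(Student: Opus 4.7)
The plan is to write $u_2(z) - u_2(p_0)$ as a single integral and bound the resulting integrand uniformly for $y \in \mathbb{R}^n \setminus 10B$ by exploiting the geometric separation between the annular region of integration and the small ball $2B$ containing $z$.

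First, I would subtract the two integrals, noting that the term $\log|y|$ appearing in each copy of $\log(|y|/|\cdot-y|)$ cancels. This yields
\begin{equation*}
u_2(z) - u_2(p_0) \;=\; \frac{1}{c_n}\int_{\mathbb{R}^n \setminus 10B} \log\frac{|z-y|}{|p_0-y|}\, Q^{-}(y)\, e^{nu(y)}\, dy.
\end{equation*}
The next step is a pointwise estimate on the logarithmic kernel. For $z \in 2B$ we have $|z - p_0| \le 2$, while for $y \in \mathbb{R}^n \setminus 10B$ we have $|p_0 - y| \ge 10$. By the triangle inequality,
\begin{equation*}
\left|\,|z-y| - |p_0-y|\,\right| \;\le\; |z - p_0| \;\le\; 2,
\end{equation*}
so $\frac{|z-y|}{|p_0-y|} \in [1 - \tfrac{1}{5},\, 1 + \tfrac{1}{5}]$, and therefore $\bigl|\log\tfrac{|z-y|}{|p_0-y|}\bigr| \le \log(5/4) < 1/4$ on the whole domain of integration.

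Finally, I would plug this pointwise bound into the integral and use hypothesis (\ref{assumption2}), namely $\int_{\mathbb{R}^n} Q^{-}(y) e^{nu(y)}\, dy = \beta$, to conclude
\begin{equation*}
|u_2(z) - u_2(p_0)| \;\le\; \frac{1}{c_n} \cdot \frac{1}{4} \int_{\mathbb{R}^n \setminus 10B} Q^{-}(y) e^{nu(y)}\, dy \;\le\; \frac{\beta}{4 c_n}.
\end{equation*}

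There is no real obstacle here; the lemma is the routine ``tail is well-behaved'' estimate needed before one tackles the near-diagonal piece $u_1$. The only mildly delicate point is making sure the $\log|y|$ pieces of the kernel cancel in the difference so that one is left with a purely local geometric comparison between $|z-y|$ and $|p_0-y|$; everything else is triangle inequality plus the finiteness of $\beta$.
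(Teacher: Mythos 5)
Your proof is correct and takes essentially the same route as the paper: cancel the $\log|y|$ terms, then bound $\log\frac{|z-y|}{|p_0-y|}$ uniformly by $1/4$ over $y \in \mathbb{R}^n\setminus 10B$ using the separation between $2B$ and $\mathbb{R}^n\setminus 10B$, and integrate against the measure of total mass $\le\beta$. The only cosmetic difference is that you bound the logarithm via the triangle inequality on the ratio ($\frac{|z-y|}{|p_0-y|}\in[4/5,6/5]$, so $|\log(\cdot)|\le\log(5/4)<1/4$), whereas the paper applies the mean value theorem to $t\mapsto\log|(1-t)(p_0-y)+t(z-y)|$ to get $|z-p_0|\cdot\frac{1}{8}\le\frac14$; both yield the same constant.
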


\begin{proof}
\begin{equation}
\begin{split}
&|u_2(z)-u_{2}(p_0)|\\
=& \frac{1}{c_n}\left|\int_{\mR\setminus 10 B}-\log \frac{|y|}{|z-y|} Q^-(y)e^{nu(y)}dy+  \int_{\mR\setminus 10 B}\log \frac{|y|}{|p_0-y|} Q^-(y)e^{nu(y)}dy\right |\\
=&\frac{1}{c_n}\left|\int_{\mR\setminus 10 B}\log \frac{|z-y|}{|p_0-y|} Q^-(y)e^{nu(y)}dy\right|\\
\leq &\frac{|z-p_0|}{c_n}\cdot \int_{\mR\setminus 10 B}\frac{1}{|(1-t^*)(p_0-y)+ t^*(z-y)|}Q^-(y)e^{nu(y)}dy, \\
\end{split}
\end{equation}
for some $t^*\in[0,1]$. Since $y\in \mR\setminus 10 B$ and $z, p_0\in 2B$,
$$\frac{1}{|(1-t^*)(p_0-y)+ t^*(z-y)|}\leq \frac{1}{8},$$
$|u_2(z)-u_{2}(p_0)|$ is bounded by
\begin{equation}
\begin{split}
\frac{|z-p_0|}{8 c_n }\cdot \int_{\mR\setminus 10 B}Q^-(y)e^{nu(y)}dy.
\end{split}
\end{equation}
Note that for $z\in 2B$, $|z-p_0|\leq 2 $. From this, (\ref{3.1}) follows.
\end{proof}

Now we adopt some techniques used in \cite{BHS}
for potentials to deal with the $\epsilon$-singular set $E_\epsilon$.
\begin{lemma}\label{Hauss1measure}(Cartan's lemma) For the Radon measure $Q^-(y)e^{nu(y)}dy$, given $\epsilon>0$, there exists a set $E_\epsilon\subseteq \mR$, such that
$$\mathcal{H}^1(E_\epsilon):= \dstyle \inf_{ E_\epsilon \subseteq \cup B_i }\{\dstyle \sum_{i}\mbox{diam } B_i \}< \epsilon $$
and for all $x\notin E_\epsilon$ and $r>0$,
$$\dstyle \int_{B(x,r)} Q^-(y)e^{nu(y)}dy\leq \frac{10 r \beta}{\epsilon }. $$
\end{lemma}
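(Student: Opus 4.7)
The plan is to run the standard Vitali/Cartan covering argument applied to the finite Radon measure $d\mu := Q^-(y)e^{nu(y)}\,dy$, whose total mass equals $\beta$ by assumption (\ref{assumption2}).

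First, I would define the exceptional set directly from the conclusion I want to violate. Set
\[
E_\epsilon \;:=\; \Bigl\{\, x\in \mathbb{R}^n \,:\, \exists\, r>0 \text{ with } \mu(B(x,r)) > \tfrac{10 r \beta}{\epsilon}\,\Bigr\}.
\]
For each $x\in E_\epsilon$ pick such an $r_x>0$. The critical observation, which makes the argument work, is that these radii are a priori bounded: since $\mu(B(x,r_x))\le \mu(\mathbb{R}^n)=\beta$, the inequality $\mu(B(x,r_x))>10 r_x \beta/\epsilon$ forces $r_x < \epsilon/10$. This uniform bound is exactly what is needed to apply a Vitali-type selection.

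Next I would invoke the standard Vitali $5r$-covering lemma in $\mathbb{R}^n$ to the family $\{B(x,r_x)\}_{x\in E_\epsilon}$: since the radii are uniformly bounded by $\epsilon/10$, one can extract a countable pairwise disjoint subfamily $\{B(x_i,r_i)\}_{i\ge 1}$ such that
\[
E_\epsilon \;\subseteq\; \bigcup_{i\ge 1} B(x_i,\,5r_i).
\]
Then I would bound the Hausdorff content directly from the definition by using $\{B(x_i,5r_i)\}$ as an admissible cover. Using that each selected ball is ``bad'' and that the selected balls are disjoint,
\[
\mathcal{H}^1(E_\epsilon) \;\le\; \sum_{i} \operatorname{diam} B(x_i,5r_i) \;=\; 10\sum_{i} r_i \;<\; \frac{\epsilon}{\beta}\sum_{i}\mu(B(x_i,r_i)) \;\le\; \frac{\epsilon}{\beta}\,\mu(\mathbb{R}^n) \;=\; \epsilon,
\]
where the strict inequality comes from the bad-ball estimate $r_i < \tfrac{\epsilon}{10\beta}\mu(B(x_i,r_i))$, and the last step uses disjointness of $\{B(x_i,r_i)\}$ together with the total mass bound $\mu(\mathbb{R}^n)=\beta$.

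The argument is essentially formal given the finite total mass; there is no real analytic obstacle. The only point requiring some care is making sure the chosen ``bad'' radii $r_x$ are uniformly bounded so that the Vitali covering lemma applies — this is handled by the observation above that $r_x<\epsilon/10$. (One could alternatively replace $r_x$ by $\sup\{r:\mu(B(x,r))>10r\beta/\epsilon\}$ and pass to a slight enlargement if the supremum is not attained, but the simple choice suffices.) The conclusion that $x\notin E_\epsilon$ implies $\mu(B(x,r))\le 10 r\beta/\epsilon$ for all $r>0$ is then immediate from the definition of $E_\epsilon$.
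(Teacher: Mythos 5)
Your proof is correct and is precisely the standard Vitali/Cartan covering argument the paper alludes to when it says the lemma ``follows from standard measure theory argument'' and omits the proof; the factor $10$ in the threshold $10r\beta/\epsilon$ is exactly what makes the $5r$-covering constant close up to give $\mathcal{H}^1(E_\epsilon)<\epsilon$, and you correctly identify the one point of care (the a priori bound $r_x<\epsilon/10$ needed for the Vitali selection).
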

The proof of Lemma 1 follows from standard measure theory argument. Thus we omit it here.

\begin{proposition}\label{claim2} Given $\epsilon>0$,
$$  \mathcal{H}^1\left( \left\{x\in 10 B :\left |\frac{-1}{c_n} \int_{10 B}\log \frac{1}{|x-y|} Q^{-}(y)e^{nu}(y)dy  \right| >\frac{C_0\beta }{\epsilon}\right\}\right)< 10\epsilon,$$
for some $C_0$ depending only on $n$.
\end{proposition}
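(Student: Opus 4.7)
The plan is to combine Cartan's lemma (Lemma \ref{Hauss1measure}) with a dyadic decomposition of the kernel $\log\frac{1}{|x-y|}$ around the singularity. Since $c_n$ depends only on $n$, the factor $1/c_n$ can be absorbed into the constant $C_0$, so I only need to bound $\left|\int_{10B}\log\frac{1}{|x-y|}Q^-(y)e^{nu(y)}\,dy\right|$ on the complement of a set of small $\mathcal{H}^1$-measure.

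First I would isolate the singular part of the kernel by splitting the region of integration into $\{y\in 10B : |x-y|>1\}$ and $\{y : |x-y|\le 1\}$. On the first region, for $x\in 10B$ one has $|x-y|\le 20$, so $|\log|x-y||\le \log 20$ and the contribution is at most $(\log 20)\beta$, which is comparable to $\beta$ independently of $\epsilon$. The interesting piece is the singular one, where $\log\frac{1}{|x-y|}\ge 0$.

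For the singular piece I would take the exceptional set $E_\epsilon$ provided by Lemma \ref{Hauss1measure} (possibly applied with a slightly smaller parameter to produce the stated $10\epsilon$), fix $x\in 10B\setminus E_\epsilon$, and decompose dyadically:
\begin{equation*}
\int_{|x-y|\le 1}\log\tfrac{1}{|x-y|}\,Q^-(y)e^{nu(y)}\,dy
=\sum_{k=0}^{\infty}\int_{A_k}\log\tfrac{1}{|x-y|}\,Q^-(y)e^{nu(y)}\,dy,
\end{equation*}
where $A_k=\{y:2^{-k-1}<|x-y|\le 2^{-k}\}$. On $A_k$ the kernel is bounded by $(k+1)\log 2$, and Cartan's lemma applied to the ball $B(x,2^{-k})$ gives the mass estimate $\int_{A_k}Q^-(y)e^{nu(y)}dy\le 10\cdot 2^{-k}\beta/\epsilon$. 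Summing produces
\begin{equation*}
\sum_{k=0}^{\infty}(k+1)\log 2\,\cdot\,\frac{10\cdot 2^{-k}\beta}{\epsilon}
\;=\;\frac{10\log 2\,\beta}{\epsilon}\sum_{k=0}^{\infty}(k+1)2^{-k}
\;=\;\frac{40\log 2\,\beta}{\epsilon},
\end{equation*}
which is the desired bound of the form $C_0\beta/\epsilon$ (combining with the harmless $(\log 20)\beta$ term, for instance by taking $C_0$ large enough to dominate both in the relevant regime $0<\epsilon\le 1$; for $\epsilon>1$ the exceptional set allowance already covers all of $10B$, making the inequality trivial).

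I do not foresee any serious obstacles: the main technical point is getting the exceptional set measure to match the stated $10\epsilon$, which is just a matter of how Cartan's lemma is invoked (scaling $\epsilon\mapsto\epsilon$ or $\epsilon\mapsto 10\epsilon$ in its statement). The core of the argument is the dyadic summation, which converges because the kernel $\log\frac{1}{|x-y|}$ is only logarithmically singular while Cartan's bound provides a linear decay in the radius; together they yield a geometric-type series with a universal constant.
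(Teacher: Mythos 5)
Your argument is essentially the paper's: both invoke Cartan's lemma (Lemma~\ref{Hauss1measure}) to obtain the exceptional set and the mass bound $\int_{B(x,r)}Q^-e^{nu}\,dy\lesssim r\beta/\epsilon$, then estimate the logarithmic potential by summing over dyadic annuli. The only real difference is bookkeeping: the paper decomposes dyadically all the way out to radius $20$, so every piece automatically carries the $\beta/\epsilon$ factor from Cartan's bound, whereas you split off the non-singular region $|x-y|>1$ with the crude bound $(\log 20)\beta$ lacking the $1/\epsilon$ factor and must reconcile it afterward. Your reconciliation has a small slip: for $\epsilon>1$ the statement is \emph{not} automatically trivial, since the $\mathcal{H}^1$-content of $10B$ is $20$, not $10$; it only becomes trivial for $\epsilon>2$, and in the remaining range $1<\epsilon\le 2$ you need to absorb the constant by taking $C_0\ge 2\log 20$ (or simply follow the paper and run the dyadic sum over the outer annuli as well, which avoids the case distinction entirely).
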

\begin{proof}
Fix $\epsilon>0$. By Lemma \ref{Hauss1measure}, there exists a set $E_\epsilon\subseteq \mR$, s.t.
$\mathcal{H}^1(E_\epsilon)< 10\epsilon $ and
for $x\notin E_\epsilon$ and $r>0$
\begin{equation}\label{3.3}\dstyle \int_{B(x,r)}Q^-(y) e^{nu(y)} dy \leq \frac{ r\beta}{\epsilon }.\end{equation}
If we can show for some $C_0= C_0(n)$
\begin{equation}\label{3.2}\dstyle 10 B \setminus E_\epsilon\subseteq \left\{ x\in 10 B :\left| \frac{-1}{c_n}\int_{10 B}\log \frac{1}{|x-y|}Q^-(y)e^{nu(y)}dy\right| \leq \frac{C_0}{\epsilon} \beta\right\}, \end{equation}
then
$$\mathcal{H}^1\left( \left\{x\in 10 B :\left |\frac{-1}{c_n} \int_{10 B}\log \frac{1}{|x-y|} Q^{-}(y)e^{nu}(y)dy  \right| >\frac{C_0\beta }{\epsilon}\right\}\right)\leq  \mathcal{H}^1(E_\epsilon)<
10\epsilon.$$
To prove (\ref{3.2}), we notice for $x\in 10 B\setminus E_\epsilon$, $r= 2^{-j}\cdot 10$, (\ref{3.3}) implies
\begin{equation}
\begin{split}
&\left|\dstyle \frac{-1}{c_n}\int_{10 B} \log \frac{1}{|x-y|} Q^-(y)e^{nu(y)} dy\right|\\
\leq &\frac{1}{c_n} \dstyle \sum_{j=-1}^\infty   \left|\int_{B(x, 2^{-j}\cdot 10)\setminus B(x, 2^{-(j+1)}\cdot 10)} \log \frac{1}{|x-y|} Q^-(y)e^{nu(y)} dy\right|\\
\leq&\dstyle \frac{1}{c_n}  \sum_{j=-1}^\infty \left(\max\{|\log 2^{-j}|,|\log 2^{-(j+1)}|\} + \log 10\right) \cdot \int_{B(x, 2^{-j}\cdot 10)\setminus B(x, 2^{-(j+1)}\cdot 10)} Q^-(y)e^{nu(y)}dy\\
\leq &\dstyle \frac{1}{c_n}  \sum_{j=-1}^\infty \left(\max\{|\log 2^{-j}|,|\log 2^{-(j+1)}|\} + \log 10\right) \cdot  \frac{2^{-j}\cdot 10 \beta}{\epsilon }\\
\leq & \frac{C_0\beta}{\epsilon },\\
\end{split}
\end{equation}
where $$C_0=\frac{ 10 \sum_{j=-1}^\infty \left(\max\{|\log 2^{-j}|,|\log 2^{-(j+1)}|\}+ \log 10\right) \cdot 2^{-j}}{c_n}<\infty, $$
depending only on $n$.
This completes the proof of the proposition.
\end{proof}
We next estimate the integral of $e^{nu_-(z)} $ over $2B$.
\begin{proposition}\label{3.8}Let $\bar{c}:= \frac{-1}{c_n}\int_{10B} \log |y|Q^-(y)e^{nu(y)}dy$. $\bar{c}<\infty$, since $Q^-(y)e^{nu(y)}$ is continuous thus bounded near the origin. Then
\begin{equation}\label{3.4}
\int_{2B} e^{nu_-(z)}dz\leq C_1(n,\beta) e^{nu_2(p_0)} e^{n\bar{c}},
\end{equation}
for $C_1(n,\beta)= e^{\frac{n\beta}{4c_n}}  12^{\frac{n\beta_{10}}{c_n}} \frac{\omega_{n-1}2^n}{n}$, where $\omega_{n-1}$ denotes the area of the (n-1)-dimensional unit sphere in $\mR$
and $\beta_{10}:=\int_{10B} Q^{-}(y)e^{nu(y)} dy\leq \beta<\infty $.
\end{proposition}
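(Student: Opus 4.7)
The plan is to write $u_-(z) = u_1(z) + u_2(z)$ on the ball $2B$ and then estimate each factor of $e^{nu_-(z)} = e^{nu_1(z)}e^{nu_2(z)}$ separately. For the far field piece, Lemma \ref{claim1} immediately gives $u_2(z) \le u_2(p_0) + \tfrac{\beta}{4c_n}$ for every $z \in 2B$, which extracts the factors $e^{nu_2(p_0)}$ and $e^{n\beta/(4c_n)}$ out of the integral.

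The heart of the matter is the local piece $u_1(z)$, which a priori could be badly behaved because the logarithmic kernel is singular at $z=y$. The key observation is that this singularity is favorable: since $Q^- \ge 0$ and the minus sign in the definition of $u_-$ combines with the logarithm, the singular values of $\log\frac{|y|}{|z-y|}$ contribute negatively, so they help rather than hurt an \emph{upper} bound on $e^{nu_1(z)}$. Concretely, I split
\begin{equation*}
u_1(z) \;=\; \frac{-1}{c_n}\int_{10B}\log|y|\,Q^-(y)e^{nu(y)}\,dy \;+\; \frac{1}{c_n}\int_{10B}\log|z-y|\,Q^-(y)e^{nu(y)}\,dy \;=\; \bar c \;+\; \frac{1}{c_n}\int_{10B}\log|z-y|\,Q^-(y)e^{nu(y)}\,dy,
\end{equation*}
and then use the fact that for $z \in 2B$ and $y \in 10B$, $|z-y| \le |z-p_0|+|p_0-y| \le 12$, hence $\log|z-y| \le \log 12$. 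Integrating against the positive measure $Q^-(y)e^{nu(y)}\,dy$ of mass $\beta_{10}$ gives $u_1(z) \le \bar c + \frac{\log 12}{c_n}\beta_{10}$, so
\begin{equation*}
e^{nu_1(z)} \;\le\; e^{n\bar c}\,12^{n\beta_{10}/c_n} \quad\text{for all } z \in 2B.
\end{equation*}

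Combining the two estimates yields a pointwise bound on $e^{nu_-(z)}$ uniformly in $z \in 2B$, and integration over $2B$ simply multiplies by $|2B| = \frac{\omega_{n-1}}{n}2^n$. Collecting all constants reproduces precisely
\begin{equation*}
C_1(n,\beta) \;=\; e^{n\beta/(4c_n)}\,12^{n\beta_{10}/c_n}\,\frac{\omega_{n-1}2^n}{n},
\end{equation*}
as claimed. No subtle step is really needed; the only conceptual point worth flagging is the asymmetry between $+$ and $-$ parts of the logarithm, which makes the pointwise one-sided bound $\log|z-y|\le\log 12$ sufficient and allows us to discard the (helpful) near-diagonal negativity of $\log|z-y|$ for free.
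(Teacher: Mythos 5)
Your proof is correct. The decomposition $u_- = u_1 + u_2$ and the use of Lemma \ref{claim1} to handle $u_2$ are the same as in the paper. Where you genuinely differ is in the treatment of $u_1$: the paper writes $u_1(z) = \bar c + \frac{1}{c_n}\int_{10B}\log|z-y|\,d\mu(y)$ with $d\mu = Q^-e^{nu}\,dy$, applies Jensen's inequality to the probability measure $d\mu/\beta_{10}$ to pass $\exp$ inside the integral, obtaining $\int_{10B}|z-y|^{n\beta_{10}/c_n}\,d\mu(y)/\beta_{10}$, and only then invokes $|z-y|\le 12$. You instead invoke $\log|z-y|\le\log 12$ directly, before exponentiating, and integrate the one-sided bound against the nonnegative measure $d\mu$ to get the uniform pointwise estimate $e^{nu_1(z)}\le e^{n\bar c}\,12^{n\beta_{10}/c_n}$ on $2B$. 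This is both shorter and logically cleaner: it makes explicit the sign observation (the near-diagonal divergence of $\log|z-y|$ is to $-\infty$ and thus harmless for an upper bound), and it bypasses Jensen entirely. Both routes land on the identical constant $C_1(n,\beta)$, so nothing is lost; the Jensen step in the paper is essentially overkill for this particular bound, though it would become necessary if one wanted a sharper estimate that actually exploited the smallness of $|z-y|$ near the diagonal rather than discarding it.
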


\begin{proof}Recall
\begin{equation}
u_1:=\displaystyle \frac{-1}{c_n}\int_{10B}\log \frac{|y|}{|x-y|} Q^-(y)e^{nu(y)} dy,
\end{equation}
and
\begin{equation}
u_2:=\displaystyle \frac{-1}{c_n}\int_{\mR\setminus 10 B} \log \frac{|y|}{|x-y|} Q^-(y)e^{nu(y)} dy.
\end{equation}
By Lemma \ref{claim1},
\begin{equation}\label{3.12}
\begin{split}
\int_{2B} e^{nu_-(z)}dz=&\int_{2B} e^{nu_1(z)}e^{nu_2(z)}dz\\
\leq&  e^{\frac{n\beta}{4c_n}}e^{nu_2(p_0)}\int_{2B} e^{nu_1(z)}dz.\\
\end{split}
\end{equation}

To estimate $u_1$, by definition
$\beta_{10}:=\int_{10B} Q^{-}(y)e^{nu(y)} dy\leq \beta<\infty $.
If $\beta_{10}= 0$, then $u_1(z)=0$ and $\bar{c}:= \frac{-1}{c_n}\int_{10B} \log |y|Q^-(y)e^{nu(y)}dy=0$. So (\ref{3.4}) follows immediately. If $\beta_{10}\neq 0$,
$\frac{Q^{-}(y)e^{nu(y)}}{\beta_{10}} dy$ is a nonnegative probability measure on $10 B$. Hence by Jensen's inequality
\begin{equation}
\begin{split}
\int_{2B} e^{nu_1(z)}dz= &e^{n\bar{c}}\cdot \int_{2B} e^{\frac{n}{c_n} \int_{10B}\log |z-y|Q^{-}(y)e^{nu(y) } dy } dz\\
\leq &e^{n\bar{c}}\cdot \int_{2B}  \int_{10B}|z-y|^{\frac{n\beta_{10}}{c_n}} \frac{Q^{-}(y)e^{nu(y)} }{\beta_{10}}dy  dz.\\
\end{split}
\end{equation}
Since $z\in 2B$ and $y\in 10B$,
\begin{equation}
 \int_{2B}|z-y|^{\frac{n\beta_{10}}{c_n}}dz\leq 12^{\frac{n\beta_{10}}{c_n}} \frac{\omega_{n-1}2^n}{n}.
\end{equation}
From this, we get
\begin{equation}
\begin{split}
\int_{2B} e^{nu_1(z)}dz\leq &  e^{n\bar{c}} 12^{\frac{n\beta_{10}}{c_n}}  \frac{\omega_{n-1}2^n}{n}  \int_{10B} \frac{Q^{-}(y)e^{nu(y)}}{\beta_{10}}dy\\
= &  e^{n\bar{c}} 12^{\frac{n\beta_{10}}{c_n}}   \frac{\omega_{n-1}2^n}{n}.\\
\end{split}
\end{equation}
Plugging it to (\ref{3.12}), we finish the proof of the proposition.
\end{proof}
Now we are ready to prove Theorem \ref{Ainfty}.\\
\noindent{\it{Proof of Theorem \ref{Ainfty}.}}
Let us assume $e^{nu_-}$ is an $A_p$ weight for some large $p$, with bounds depending only on $n$ and
$\beta$. We will prove this statement, in fact for a more general setting, in Proposition \ref{5.2}.
So by the reverse H\"{o}lder's inequality for $A_p$ weights, it is easy to prove (see for example Proposition 3.12 in \cite{S2}),
$$\delta_{\omega^-}(x,y)\leq C_2(n,\beta) d_{\omega^-}(x,y).$$
Hence we only need to prove the other side of the inequality:
\begin{equation}\label{3.9}\delta_{\omega^-}(x,y)\geq C_3(n,\beta) d_{\omega^-}(x,y),
\end{equation}
for some constant $C_3(n,\beta)$.
By Proposition \ref{claim2}, for a given $\epsilon>0$, there exists a Borel set $E_\epsilon\subseteq
\mR$, such that
\begin{equation}\label{3.13}\mathcal{H}^{1}(E_\epsilon)\leq 10 \epsilon,\end{equation} and for
$z\in 10B \setminus E_\epsilon$, according to (\ref{3.2})
\begin{equation}
\label{3.7}
|\hat{u}_1(z)|\leq \frac{C_0}{\epsilon}\beta. \end{equation}
Here $$\hat{u}_1(z):=\dstyle \frac{-1}{c_n}\int_{10 B}\log \frac{1}{|x-y|} Q^{-}(y)e^{nu(y)}dy.  $$
With this, we claim the following estimate.\\
{{\bf Claim:} Suppose $\mathcal{H}^{1}(E_\epsilon)< 10\epsilon$ with $\epsilon\leq \frac{1}{20}$. Then
\begin{equation}\label{arclength-est}
\dstyle \mbox{length } (\gamma\setminus E_\epsilon)> \frac{3}{2},
\end{equation}
where $\gamma\subset B_{xy}$ is a curve connecting $x$ and $y$.
}

{\it {Proof of Claim.}}
Let $P$ be the projection map from points in $B_{xy}$ to the line segment $I_{xy}$ between $x$ and $y$. Since the Jacobian of projection map is less or equal to 1,
\begin{equation}\label{3.5}
\mbox{length } (\gamma\setminus E_\epsilon)\geq
\mbox{length } (P(\gamma\setminus E_\epsilon))= m(P(\gamma\setminus E_\epsilon)),
\end{equation}
where $m$ is the Lebesgue measure on the line segment $I_{xy}$.
Notice $P(\gamma)=I_{xy}$, and
$P(\gamma)\setminus P(E_\epsilon)$ is a subset of $P(\gamma\setminus E_\epsilon)$. Therefore
\begin{equation}\label{3.6}
m(P(\gamma\setminus E_\epsilon))\geq m(P(\gamma))- m(P(E_\epsilon))=2-m(P(E_\epsilon)).\end{equation}
Now by assumption, $\mathcal{H}^{1}(E_\epsilon)< 10\epsilon$,
 so $\mathcal{H}^{1}(\gamma\cap E_\epsilon)< 10\epsilon$. Hence
there is a covering $\cup_i B_i$ of $\gamma \cap E_\epsilon$, so that
$$ \dstyle \sum_{i}\mbox{diam } B_i< 10\epsilon. $$
This implies that $\cup_{i} P(B_{i})$ is a covering of the set $P(\gamma \cap E_\epsilon)$
and
$$\dstyle \sum_{i}\mbox{diam } P(B_i)= \dstyle \sum_{i}\mbox{diam } B_i\leq 10\epsilon. $$
Thus $m(P(E_\epsilon))= \mathcal{H}^1(P(E_\epsilon))< 10\epsilon< \frac{1}{2}$, by choosing $\epsilon \leq \frac{1}{20}$. Plug it to (\ref{3.6}), and then to (\ref{3.5}). This completes the proof of the claim.

We now continue the proof of Theorem \ref{Ainfty}.
Since $\gamma \subset B$, then by Lemma \ref{claim1},
\begin{equation}
\begin{split}
\dstyle \int_{\gamma} e^{u_-(\gamma(s))} ds=\int_{\gamma} e^{(u_1+u_2)(\gamma(s))} ds \geq &\dstyle e^{\frac{-\beta}{4c_n}}e^{u_{2}(p_0)}  e^{\bar{c}}  \int_{\gamma}  e^{\hat{u}_1(\gamma(s))}ds.\\
\end{split}
\end{equation}
Here $\bar{c}$ is the constant defined in Proposition \ref{3.8}.
Let $\epsilon =\frac{1}{20}$. By (\ref{3.7}),
$$|\hat{u}_1(z)|\leq 20C_0 \beta $$
for $z\in 10B\setminus E_\epsilon$.
Thus
\begin{equation}
\dstyle \int_{\gamma}  e^{\hat{u}_1(\gamma(s))}ds\geq e^{- 20 C_0 \beta} \mbox{length } (\gamma\setminus E_\epsilon) .\\
\end{equation}
By (\ref{arclength-est}), it is bigger than
$$\frac{3}{2}e^{- 20 C_0 \beta}. $$
Therefore
\begin{equation}\label{3.10}
\begin{split}
\dstyle \int_{\gamma} e^{u_-(\gamma(s))} ds\geq
\frac{3}{2} e^{\frac{-\beta}{4c_n}} e^{- 20  C_0 \beta}e^{u_{2}(p_0)}  e^{\bar{c}}=C_4(n,\beta)e^{u_{2}(p_0)}  e^{\bar{c}}
\end{split}
\end{equation}
for $C_4(n,\beta)=\frac{3}{2} e^{\frac{-\beta}{4c_n}} e^{- 20  C_0 \beta}$.
By inequality (\ref{3.10}) and Proposition \ref{3.8}, we conclude for any curve $\gamma\subset B$ connecting $x$ and $y$, there is a $C_3=C_3(n,\beta)$ such that
\begin{equation}
\begin{split}
\dstyle \int_{\gamma} e^{u_-(\gamma(s))} ds\geq
C_{3}(n,\beta) (\int_{B_{xy}} e^{nu_-(z)}dz)^{\frac{1}{n}}.
\end{split}
\end{equation}
This implies inequality (\ref{3.9}) and thus completes the proof of Theorem \ref{Ainfty}.

\section{On the positive part of the $Q$-curvature}
In this section, we consider the positive measure $\frac{1}{c_n}Q^+(x) e^{nu(x)} dx$. We recall the assumption (\ref{assumption1}), $\alpha:=\int_{\mR} Q^+(x) e^{nu(x)} dx< c_n $. Recall Definition \ref{defu+-},
\begin{equation}
u_+(x):=\displaystyle \frac{1}{c_n}\int_{\mR} \log \frac{|y|}{|x-y|} Q^+(y)e^{nu(y)} dy.
\end{equation}
\begin{theorem}\label{A1}Suppose
$\alpha:=\int_{\mR} Q^+(x) e^{nu(x)} dx< c_n $. Then
$e^{nu_+}$ is an $A_1$ weight, i.e.
\begin{equation}\label{eqnA1}
M(e^{nu_+})(x) \leq  C(n,\alpha) e^{nu_+(x)} \quad \mbox{a.e.} \quad x\in \mR,
\end{equation}
where $M(\cdot)$ denotes the maximal function $$M(f)(x):= \sup_{r>0}\fint_{B(x, r)} |f(y)| dy.$$
\end{theorem}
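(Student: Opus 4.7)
The plan is to verify the $A_1$ condition directly by estimating $\fint_B e^{nu_+(y)}\,dy$ for an arbitrary ball $B=B(x_0,r)\subset\mR$ and comparing with $e^{nu_+(x)}$ at almost every $x\in B$. Since $u_+(y)=\frac{1}{c_n}\int\log|z|\,Q^+e^{nu}\,dz-\frac{1}{c_n}\int\log|y-z|\,Q^+e^{nu}\,dz$, the first term is a constant independent of $y$, so it suffices to work with the logarithmic potential $I(y):=-\frac{1}{c_n}\int_{\mR}\log|y-z|\,Q^+(z)e^{nu(z)}\,dz$ and show that $e^{nI}$ is $A_1$.

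The first step is to split the integral $I(y)$ into a near part $I_{\mathrm{in}}(y)$ over $2B$ and a far part $I_{\mathrm{out}}(y)$ over $(2B)^c$. For $y,x\in B$ and $z\in(2B)^c$ we have $|y-z|$ and $|x-z|$ comparable up to a factor of $3$, so $|I_{\mathrm{out}}(y)-I_{\mathrm{out}}(x)|\leq \frac{\log 3}{c_n}\int_{(2B)^c}Q^+e^{nu}\,dz\leq \frac{\alpha\log 3}{c_n}$. This lets me pull $e^{nI_{\mathrm{out}}}$ out of any comparison at the cost of a constant depending only on $n,\alpha$.

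The second step handles the near part by Jensen's inequality, which is where the sharp hypothesis $\alpha<c_n$ enters. Let $\alpha_B:=\int_{2B}Q^+e^{nu}\,dz\leq\alpha$ and let $\mu_B:=Q^+e^{nu}\mathbf{1}_{2B}/\alpha_B$, a probability measure (if $\alpha_B=0$ there is nothing to estimate). Applying Jensen with the convex function $\exp$ gives
\begin{equation*}
e^{nI_{\mathrm{in}}(y)}=\exp\!\Bigl(\alpha_B\!\int_{2B}\!\log|y-z|^{-n/c_n}\,d\mu_B\Bigr)\leq \int_{2B}|y-z|^{-n\alpha_B/c_n}\,d\mu_B(z).
\end{equation*}
Integrating in $y\in B$ and invoking $\int_B|y-z|^{-s}\,dy\leq \frac{\omega_{n-1}}{n-s}(3r)^{n-s}$, valid because $s=n\alpha_B/c_n<n$, Fubini yields $\fint_B e^{nI_{\mathrm{in}}(y)}\,dy\leq C(n,\alpha)\,r^{-n\alpha_B/c_n}$. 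Note that the sharpness of the constant $c_n$ appears precisely here: the factor $\frac{1}{n-s}$ blows up when $\alpha\uparrow c_n$.

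The final step compares $r^{-n\alpha_B/c_n}$ with $e^{nI_{\mathrm{in}}(x)}$. For a.e. $x\in B$ the potential $I_{\mathrm{in}}(x)$ is finite, and writing $-\frac{n\alpha_B}{c_n}\log r=\frac{n}{c_n}\int_{2B}\log(1/r)\,Q^+e^{nu}\,dz$ we find
\begin{equation*}
\log\!\Bigl(r^{-n\alpha_B/c_n}e^{-nI_{\mathrm{in}}(x)}\Bigr)=\frac{n}{c_n}\!\int_{2B}\!\log\tfrac{|x-z|}{r}\,Q^+e^{nu}\,dz\leq \tfrac{n\alpha\log 3}{c_n},
\end{equation*}
since $|x-z|\leq 3r$ for $x\in B,z\in 2B$. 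Combining the near and far parts gives $\fint_B e^{nu_+(y)}\,dy\leq C(n,\alpha)\,e^{nu_+(x)}$ for a.e. $x\in B$, which is exactly \eqref{eqnA1}. The main obstacle is the Jensen step together with the integrability of $|y-z|^{-s}$ on $B$; both require the strict inequality $\alpha<c_n$, matching the hypothesis \eqref{assumption1}, and the resulting constant depends only on $n$ and $\alpha$ as claimed.
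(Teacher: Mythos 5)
Your proof is correct and reaches the same conclusion, but it takes a genuinely different route from the paper's. The paper never separates the kernel into near and far pieces: it writes the ratio $M(e^{nu_+})(x)/e^{nu_+(x)}$ directly as $\sup_{r>0}\fint_{B(x,r)}\exp\bigl(\frac{n}{c_n}\int_{\mR}\log\frac{|x-z|}{|y-z|}\,Q^+e^{nu}\,dz\bigr)\,dy$ (so the two $\log|z|$ terms cancel exactly, and the question of whether $\int\log|z|\,Q^+e^{nu}\,dz$ converges never arises), applies Jensen with the \emph{global} probability measure $d\nu_+ = Q^+e^{nu}\,dz/\alpha$, and then bounds the resulting double integral by invoking the known fact that $|y|^{-n\alpha/c_n}$ is an $A_1$ weight with translation-invariant bound once $n\alpha/c_n<n$. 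You instead split $I=I_{\mathrm{in}}+I_{\mathrm{out}}$, show the far part is nearly constant on $B$, apply Jensen only to the near part with the local probability measure on $2B$, and estimate $\int_B|y-z|^{-s}\,dy$ by hand. What your route buys is self-containedness — you reprove the $A_1$ estimate for the power weight in the form you need rather than citing it, and the blow-up of $\frac{1}{n-s}$ makes the sharpness of $c_n$ transparent; what the paper's route buys is brevity and the automatic cancellation of the $\log|z|$ normalization. One small cosmetic point in your writeup: you peel off the ``constant'' $K=\frac{1}{c_n}\int\log|z|\,Q^+e^{nu}\,dz$ and then work with $I(y)=u_+(y)-K$ as if both were individually finite, but under the hypotheses only the combined integral defining $u_+$ is guaranteed to converge, and $K$ could in principle be $+\infty$. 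This does not affect the substance of the argument, since every step actually only uses the differences $I_{\mathrm{in}}(y)-I_{\mathrm{in}}(x)$ (a finite integral over the bounded set $2B$, where $Q^+e^{nu}$ is locally bounded) and $I_{\mathrm{out}}(y)-I_{\mathrm{out}}(x)$ (which you show directly is an absolutely convergent integral bounded by $\alpha\log 3/c_n$), so the fix is just to phrase everything in terms of $u_+(y)-u_+(x)$ rather than $I$ alone.
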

\noindent{\it{Proof of Proposition \ref{A1}:}} Note that
\begin{equation}
\begin{array}{lcl}\label{73}
\dstyle \frac{\textsl{M}(e^{nu_+})(x)}{e^{nu_+(x)} }&=& \dstyle \sup_{r>0}\frac{\frac{1}{|B(x,r)|}
\dstyle \int_{B(x,r)} \exp\left( \dstyle \frac{n}{c_n}\int_{\mR}\log\frac{|z|}{|y-z|} Q^+(z)e^{nu(z)}dz \right)dy }{ \exp\left(\dstyle \frac{n}{c_n}\int_{\mR}\log\frac{|z|}{|x-z|} Q^+(z)e^{nu(z)}dz\right)}\\
&=&\dstyle \sup_{r>0}\frac{1}{|B(x,r)|}
\dstyle\int_{B(x,r)}\exp\left(\dstyle \frac{n}{c_n}\int_{\mR}\log\frac{|x-z|}{|y-z|} Q^+(z)e^{nu(z)}dz \right)dy\\
\end{array}
\end{equation}
If $\alpha=0$, then (\ref{eqnA1}) is obviously true. So let us assume $\alpha \neq 0$ and define the nonnegative probability measure $\nu_+(z):=\frac{Q^+(z)e^{nu(z)}dz}{\alpha}$. By
Jensen's inequality,
we get for any $r>0$,
\begin{equation}\label{74}
\begin{array}{lcl}
&&\dstyle\frac{1}{|B(x,r)|}
\int_{B(x,r)}\exp\left(\dstyle \frac{n}{c_n}\int_{\mR}\log\frac{|x-z|}{|y-z|} Q^+(z)e^{nu(z)}dz \right)dy\\
&\leq & \dstyle\frac{1}{|B(x,r)|}
\int_{B(x,r)}\dstyle \int_{\mR}\left(\frac{|x-z|}{|y-z|}\right)^{\frac{n\alpha}{c_n}} d\nu_+(z)\ dy\\
&=&\dstyle \int_{\mR}\frac{1}{|B(x,r)|}
\int_{B(x,r)} \left(\frac{|x-z|}{|y-z|}\right)^{\frac{n\alpha}{c_n}}  dy \ d\nu_+(z).\\
\end{array}
\end{equation}
As discussed in section 2, $\frac{1}{|x|^{\frac{n\alpha}{c_n}}}$ is an $A_1$ weight on $\mR$ with $A_1$ bound depending on $n$ and $\alpha$ when $\alpha<c_n$.
Hence for any $x\in\mR$, $r>0$,
\begin{equation}\dstyle \frac{\frac{1}{|B(x,r)|}
\dstyle \int_{B(x,r)} \left(\frac{1}{|y|^{\frac{n\alpha}{c_n}}}\right)  dy}{\dstyle \frac{1}{|x|^{\frac{n\alpha}{c_n}}}}\leq C(n,\alpha),
\end{equation}
Obviously if we shift the function $\frac{1}{|x|^{\frac{n\alpha}{c_n}}}$ by any point $z\in \mR$, the inequality is still valid with the same constant $C(n,\alpha)$, i.e.
\begin{equation} \dstyle \frac{\frac{1}{|B(x,r)|}
\dstyle \int_{B(x,r)} \left(\frac{1}{|y-z|^{\frac{n\alpha}{c_n}}}\right)  dy}{\dstyle \frac{1}{|x-z|^{\frac{n\alpha}{c_n}}}}\leq C(n,\alpha).
\end{equation}
Applying it to (\ref{74}), we obtain
\begin{equation}
\begin{array}{lcl}\label{77}
&&\dstyle \int_{\mR}\frac{1}{|B(x,r)|}
\int_{B(x,r)} \left(\frac{|x-z|}{|y-z|}\right)^{\frac{n\alpha}{c_n}}  dy \ d\nu_+(z)\\
&\leq & \dstyle \int_{\mR}  C(n,\alpha)\ d\nu_+(z) =  C(n,\alpha),\\
\end{array}
\end{equation}
for any $r>0$ and $x\in \mR$.
Thus (\ref{eqnA1}) follows.
This finishes the proof of the theorem.

\section{Proof of Theorem \ref{main}}
We begin this section by showing that $e^{nu}$ is an $A_p$ weight for large $p$.
\begin{proposition}\label{5.2}For
\begin{equation}u(x)=
\displaystyle \frac{1}{c_n}\int_{\mR} \log \frac{|y|}{|x-y|} Q(y)e^{nu(y)} dy
\end{equation}
with assumptions (\ref{assumption1}) and (\ref{assumption2}),
$e^{nu(x)}$ is an $A_p$ weight for some large $p$. Its $A_p$ bound depends only on $n$, $\alpha$ and $\beta$.
\end{proposition}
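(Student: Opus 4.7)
The plan is to combine the results of Sections 3 and 4 via the well-known ``easy'' direction of Peter Jones' factorization theorem: if $\omega_1$ and $\omega_2$ are both $A_1$ weights and $p>1$, then $\omega_1 \omega_2^{1-p}$ is an $A_p$ weight with bound depending only on the $A_1$ bounds of $\omega_1, \omega_2$ and on $p$. The strategy is to exhibit $e^{nu}$ in exactly this form, using $u=u_++u_-$.

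Set $\omega_1 := e^{nu_+}$ and $\omega_2 := e^{-nu_-/(p-1)}$, so that $\omega_1\,\omega_2^{1-p} = e^{nu_+}\cdot e^{nu_-}$, which differs from $e^{nu}$ only by an overall multiplicative constant coming from the additive constant $C$ in (\ref{normal}); such a constant does not affect the $A_p$ bound. By Theorem \ref{A1} and the assumption $\alpha<c_n$, $\omega_1$ is an $A_1$ weight with bound depending only on $n$ and $\alpha$. For $\omega_2$ I would observe that
\begin{equation*}
\omega_2(x) \;=\; \exp\!\left(\dstyle\frac{n}{(p-1)c_n}\int_{\mR}\log\frac{|y|}{|x-y|}\,Q^{-}(y)e^{nu(y)}\,dy\right),
\end{equation*}
which has the same structural form as $e^{nu_+}$ but with $Q^+$ replaced by $Q^-$ and with total mass $\beta/(p-1)$ in place of $\alpha$. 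The proof of Theorem \ref{A1} applies verbatim; the only place the hypothesis $\alpha<c_n$ enters is to guarantee that the model weight $|x|^{-n\alpha/c_n}$ is $A_1$ on $\mR$. The analogous condition here is $\frac{\beta}{(p-1)c_n}<1$, so it suffices to fix any $p>1+\beta/c_n$ (for instance $p=2+\beta/c_n$); with this choice, $\omega_2$ is $A_1$ with bound depending only on $n$ and $\beta$.

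It remains to verify that $\omega_1\omega_2^{1-p}$ is $A_p$. For any ball $B$, using the $A_1$ property $\mathrm{avg}_B\omega_i \leq C_i\,\mathrm{ess\,inf}_B\omega_i$ together with the pointwise bounds $\omega_2^{1-p} \leq (\mathrm{ess\,inf}_B\omega_2)^{1-p}$ and $\omega_1^{-1/(p-1)} \leq (\mathrm{ess\,inf}_B\omega_1)^{-1/(p-1)}$ on $B$, one gets
\begin{align*}
\mathrm{avg}_B(\omega_1\omega_2^{1-p}) &\leq C(\mathrm{ess\,inf}_B\omega_1)(\mathrm{ess\,inf}_B\omega_2)^{1-p},\\
\mathrm{avg}_B(\omega_1\omega_2^{1-p})^{-\frac{1}{p-1}} &\leq C(\mathrm{ess\,inf}_B\omega_1)^{-\frac{1}{p-1}}(\mathrm{ess\,inf}_B\omega_2).
\end{align*}
Multiplying the first bound by the $(p-1)$-th power of the second yields a uniform $A_p$ estimate $C^p$ depending only on $n,\alpha,\beta$, completing the proof.

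The only nontrivial ingredient is the rerun of the Theorem \ref{A1} argument with $Q^-$ in place of $Q^+$, and the main point there is merely the freedom to make the effective exponent $\frac{n\beta}{(p-1)c_n}$ as small as needed by enlarging $p$; this is precisely why no smallness assumption on $\beta$ is required, in sharp contrast with the $A_1$ statement for $e^{nu_+}$ which really needs $\alpha<c_n$. Once Step 2 is in place, the factorization $e^{nu}=\omega_1\omega_2^{1-p}$ and the standard $A_1\cdot A_1^{1-p}\Rightarrow A_p$ lemma deliver the conclusion without further work.
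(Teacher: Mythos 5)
Your proof is correct and follows essentially the same route as the paper: both arguments split $u=u_++u_-$, observe that $e^{-\epsilon n u_-}$ (with $\epsilon=1/(p-1)$) has the same structural form as $e^{nu_+}$ and hence is $A_1$ by Theorem \ref{A1} once $p$ is large enough that $\beta/(p-1)<c_n$, and then multiply the two $A_1$ estimates in the standard Jones way to obtain the $A_p$ bound. The paper writes out the final multiplication step explicitly rather than invoking the $A_1\cdot A_1^{1-p}\Rightarrow A_p$ lemma by name, but the computation is identical.
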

\begin{proof}
By Theorem \ref{A1}, $e^{nu_+}$ is an $A_1$ weight, so there is a uniform constant $C=C(n, \alpha)$, so that for all $x_0\in \mR$ and $r>0$
\begin{equation}
\dstyle \frac{1}{|B(x_0,r)|} \int_{B(x_0,r)} e^{nu_+(y)} dy \leq C(n,\alpha) e^{nu_+(x_0)}.
\end{equation}
So for all $x\in B(x_0,r)$
\begin{equation}
\begin{split}
 \dstyle \frac{1}{|B(x_0,r)|} \int_{B(x_0,r)} e^{nu_+(y)} dy \leq& \dstyle \frac{1}{|B(x_0,r)|} \int_{B(x,2r)} e^{nu_+(y)} dy\\
&=  \frac{2^n}{|B(x,2r)|} \int_{B(x,2r)} e^{nu_+(y)} dy\\
&\leq 2^n C(n,\alpha) e^{nu_+(x)}.\\
\end{split}
\end{equation}
Namely, for all ball $B$ in $\mR$ and $x\in B$,
\begin{equation}\label{5.3}
 \dstyle \frac{1}{|B|} \int_{B} e^{nu_+(y)} dy \leq 2^n C(n,\alpha) e^{nu_+(x)}.
\end{equation}
We observe that $e^{ -\epsilon n u_-(x)}  $ is also an $A_1$ weight for $\epsilon= \epsilon (n, \beta)<<1$.
In fact,
\begin{equation}e^{ -\epsilon n u_-(x)}   = e^{ \frac{n}{c_n}\int_{\mR} \log\frac{|y|}{|x-y|} \epsilon Q^{ -}(y) e^{nu(y)} dy   }. \end{equation}
$ Q^{ -}(y) e^{nu(y)}\geq 0 $ and $\int_{\mR} \epsilon Q^{ -}(y) e^{nu(y)} dy< c_n $ if $\epsilon$ is small enough.
Thus by Theorem \ref{A1}, $e^{ -\epsilon n u_-(x)}  $ is an $A_1$ weight,
As (\ref{5.3}), we have
\begin{equation}\label{5.4}
\dstyle \frac{1}{|B|} \int_{B} e^{ -\epsilon n u_-(y)}     dy \leq C(n,\beta) e^{ -\epsilon n u_-(x)}
\end{equation}
for all ball $B$ in $\mR$ and all $x\in B$.
Choose $1<p<\infty$ such that $\epsilon= p'/p$ with $\frac{1}{p}+ \frac{1}{p'}= 1$.
Using $ e^{nu}= e^{nu_+}\cdot e^{nu_-} $, we get
\begin{equation}\label{5.16}
\begin{split}
&\left(\int_{B}  e^{nu(x)} dx\right)^{\frac{1}{p}}\left(\int_{B} (e^{nu(x)})^{-\frac{p'}{p}} dx\right)^{\frac{1}{p'}}\\
= &\left(\int_{B}  e^{nu_+}\cdot (e^{-\epsilon n u_-} )^{-\frac{1}{\epsilon}}   dx\right)^{\frac{1}{p}} \left(\int_{B} (e^{ nu_+})^{-\frac{p'}{p}} \cdot e^{-\epsilon n u_-}   dx\right)^{\frac{1}{p'}}.\\
\end{split}
\end{equation}
By (\ref{5.4}), if $p$ is large enough and thus $\epsilon$ is small enough, then
$$(e^{-\epsilon n u_-} )^{-\frac{1}{\epsilon}} \leq  \left(\frac{1}{C(n,\beta) |B|}\int_{B}e^{-\epsilon n u_-}dx\right)^{-\frac{1}{\epsilon }}.$$
So
\begin{equation}\label{5.14}
\begin{split} \left(\int_{B}  e^{nu_+}\cdot (e^{-\epsilon n u_-} )^{-\frac{1}{\epsilon}}   dx\right)^{\frac{1}{p}}
\leq& \left(\int_{B}  e^{nu_+}    dx\right)^{\frac{1}{p}}   \left(\frac{1}{C(n,\beta) |B|}\int_{B}e^{-\epsilon n u_-}dx\right)^{-\frac{1}{\epsilon p}} \\
 =& \left(\int_{B}  e^{nu_+}    dx\right)^{\frac{1}{p}}   \left(\frac{1}{C(n,\beta)  |B|}\int_{B}e^{-\epsilon n u_-}dx\right)^{-\frac{1}{p'}}.\\
\end{split}
\end{equation}
Similarly, by (\ref{5.3})
 $$(e^{ nu_+})^{-\frac{p'}{p}}\leq \left(\frac{1}{2^nC(n,\alpha)  |B|}\int_{B} e^{ nu_+} dx \right)^{-\frac{p'}{p}}.$$
So \begin{equation}\label{5.15}
\left(\int_{B} (e^{ nu_+})^{-\frac{p'}{p}} \cdot e^{-\epsilon n u_-}  dx\right)^{\frac{1}{p'}}
\leq
\left(\frac{1}{2^nC(n,\alpha)   |B|}\int_{B} e^{ nu_+} dx \right)^{-\frac{1}{p}}
\left(\int_{B}  e^{-\epsilon n u_-} dx\right)^{\frac{1}{p'}}.
 \end{equation}
Applying (\ref{5.14}) to (\ref{5.15}) in (\ref{5.16}), we have
\begin{equation}
\left(\int_{B}  e^{nu(x)} dx\right)^{\frac{1}{p}}\left(\int_{B} (e^{nu(x)})^{-\frac{p'}{p}} dx\right)^{\frac{1}{p'}}\\
\leq (\frac{1}{C |B|})^{-\frac{1}{p}-\frac{1}{p'}}=C|B|
\end{equation}
for $p>>1$.
This shows that $ e^{nu(x)}$ is an $A_p$ weight for $p>>1$. The bound $C$ depends only on $n$, $\alpha$ and $\beta$.
\end{proof}

Now we recall a lemma \cite[Lemma 3.17]{S2}. Though the set-up in \cite{S2} is slightly different (but equivalent) from ours;
the proof of the lemma is straightforward following the definitions of strong $A_\infty$ weight and $A_1$ weight. Thus we omit it here.
\begin{lemma}\label{5.1}\cite[Lemma 3.17]{S2}
Assume that $\omega_1$ is an $A_1$ weight, $\omega_2$ is a strong $A_\infty$ weight, and that $r$ is a positive real number. If $\omega_1^r \omega_2$ is $A_\infty$, then $\omega_1^r\omega_2$ is strong $A_\infty$.
\end{lemma}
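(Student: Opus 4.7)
The plan is to prove $\delta_\omega(x,y)\le C\,d_\omega(x,y)$ for $\omega:=\omega_1^r\omega_2$ and $B:=B_{xy}$; by hypothesis $\omega\in A_\infty$, which already yields the reverse inequality (\ref{A}), so strong $A_\infty$ will follow. The core idea is to combine an $A_1$ lower bound on $\omega_1$ with the strong $A_\infty$ property of $\omega_2$, and then to invoke the $A_\infty$ hypothesis on $\omega$ via a level-set decomposition of $B$ in order to compare $\int_B \omega_1^r\omega_2$ with $(\frac{1}{|B|}\int_B \omega_1)^r \int_B \omega_2$.

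First, since $\omega_1\in A_1$, one has $\omega_1(z)\ge c\cdot\frac{1}{|B|}\int_B \omega_1$ for a.e.\ $z\in B$. Raising to the power $r/n$ and inserting this pointwise bound inside any curve integral $\int_\gamma \omega_1^{r/n}\omega_2^{1/n}\,ds$ (for $\gamma\subset B$ joining $x$ and $y$), then taking the infimum over $\gamma$, yields
\[
d_\omega(x,y)\ \ge\ c_1\,\bigl(\tfrac{1}{|B|}\int_B \omega_1\bigr)^{r/n}\, d_{\omega_2}(x,y).
\]
The strong $A_\infty$ property of $\omega_2$ gives $d_{\omega_2}(x,y)\ge c_2\,(\int_B \omega_2)^{1/n}$, and therefore
\[
d_\omega(x,y)^n\ \ge\ c_3\,|B|\,\bigl(\tfrac{1}{|B|}\int_B \omega_1\bigr)^r\bigl(\tfrac{1}{|B|}\int_B \omega_2\bigr).
\]

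Since $\delta_\omega(x,y)^n=\int_B \omega_1^r\omega_2$, the desired inequality reduces to the key estimate
\[
\tfrac{1}{|B|}\int_B \omega_1^r\omega_2\ \le\ C\,\bigl(\tfrac{1}{|B|}\int_B \omega_1\bigr)^{r}\bigl(\tfrac{1}{|B|}\int_B \omega_2\bigr). \qquad(\ast)
\]
This step is the main obstacle: $(\ast)$ is false in general and would fail precisely when $\omega_1$ concentrates on a small subset of $B$, so the $A_\infty$ hypothesis on $\omega$ must enter here. I would fix a large $M>1$ and split $B=B_{\mathrm{good}}\cup B_{\mathrm{bad}}$ with $B_{\mathrm{good}}:=\{z\in B:\omega_1(z)\le M\cdot\frac{1}{|B|}\int_B\omega_1\}$. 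Chebyshev gives $|B_{\mathrm{bad}}|\le |B|/M$, and the pointwise bound on $B_{\mathrm{good}}$ yields $\int_{B_{\mathrm{good}}}\omega_1^r\omega_2\le M^r\,(\frac{1}{|B|}\int_B \omega_1)^r\int_B\omega_2$. By the standard measure-theoretic characterization of $A_\infty$ for $\omega$---for every $\eta>0$ there is $\delta>0$ such that $|E|/|B|<\delta$ implies $\omega(E)/\omega(B)<\eta$---one chooses $M$ large enough that $1/M<\delta$ when $\eta=1/2$; then $\omega(B_{\mathrm{bad}})\le\tfrac12\,\omega(B)$, giving $\omega(B)\le 2\,\omega(B_{\mathrm{good}})$ and hence $(\ast)$ with $C=2M^r$.

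Combining the two estimates yields $d_\omega(x,y)^n\ge C'\int_B\omega = C'\delta_\omega(x,y)^n$, which is exactly $\delta_\omega\le C''d_\omega$, completing the proof that $\omega_1^r\omega_2$ is strong $A_\infty$. The delicate point is step $(\ast)$: the $A_1$ and strong $A_\infty$ hypotheses provide only one-sided controls on $\omega_1$ and $\omega_2$ separately, and the $A_\infty$ hypothesis on the product is the essential ingredient that prevents pathological concentration of $\omega_1^r\omega_2$ on small sets.
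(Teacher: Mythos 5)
The paper omits a proof of this lemma, citing only Semmes \cite[Lemma 3.17]{S2}, so there is no in-paper argument to compare against; your derivation is correct and complete, and is the natural route that the cited lemma invites. You correctly identify the crux as the mass comparison $(\ast)$, and you are right that it cannot follow from the $A_1$ and strong-$A_\infty$ hypotheses alone: the $A_1$ condition bounds $\omega_1$ only from below on a ball and does nothing to prevent $\omega_1^r\omega_2$ from concentrating on a thin subset of $B$, which is exactly what $(\ast)$ forbids. Using Chebyshev on the level set $\{\omega_1>M\cdot\frac{1}{|B|}\int_B\omega_1\}$ together with the absolute-continuity characterization of $A_\infty$ (a small Lebesgue fraction of $B$ carries a small $\omega$-fraction, uniformly over balls) closes $(\ast)$ cleanly, with a constant depending only on the $A_1$ constant of $\omega_1$, the strong-$A_\infty$ constant of $\omega_2$, the $A_\infty$ constant of the product, $r$, and $n$. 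One minor point worth making explicit in a write-up: the $A_1$ lower bound $\omega_1\ge c\,\frac{1}{|B|}\int_B\omega_1$ on $B$ holds a priori only almost everywhere (with respect to Lebesgue measure on $\mR$), yet you invoke it pointwise along a rectifiable curve, which is a Lebesgue-null set; this is legitimate here because the paper's strong-$A_\infty$ framework assumes the weights are positive and continuous, so the a.e.\ inequality upgrades to an everywhere inequality on $B$.
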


\noindent {\it{Proof of Theorem \ref{main}:}}
In Theorem \ref{A1} and Theorem {\ref{Ainfty}} we have proved that $e^{nu_+(x)}$ is an $A_1$ weight and $e^{nu_-(x)}$ is a strong $A_\infty$ weight.
Also by Proposition \ref{5.2}, $e^{nu}= e^{nu_+}\cdot e^{nu_-}$ is an $A_p$ weight for $p>>1$. Therefore $ e^{nu(x)}$ is an $A_\infty$ weight. Applying Lemma \ref{5.1} (with $r=1$), we obtain $e^{nu}$ is a strong $A_\infty$ weight with bound depending only on $n$, $\alpha$ and $\beta$. Therefore according to Theorem \ref{2.1}, the isoperimetric inequality is valid with constant depending only on the bound of strong $A_\infty$ weight of $e^{nu}$, and thus only on $n$, $\alpha$ and $\beta$. This completes the proof of Theorem \ref{main}.

\begin{remark}\label{remarkc_n}As we pointed out in the introduction, the assumption (\ref{assumption1}) is sharp. In fact, $c_n$ is equal to the integral of the $Q$-curvature of the standard sphere metric on a unit hemisphere, and $Q$-curvature is equal to 0 on a flat cylinder. Thus a cylinder with a hemisphere attached to one of its end (one can slightly perturb the metric in order to glue smoothly) has $\alpha=c_n$ and $\beta=0$; and it is conformal equivalent to $(\mR, |dx|^2)$. But such a manifold certainly fails to satisfy the isoperimetric inequality.
\end{remark}
\begin{remark}\label{remarknormal}The assumption on ``normal metric" is necessary when dimension is higher than 2, due to the nature of the problem.
On one hand, if this assumption is removed, there are examples of manifolds with non-uniform isoperimetric constant;
on the other hand, no assumption on ``normal metric" is needed when $n=2$. Because by \cite{Huber}'s result, every complete noncompact metric with integrable Gaussian curvature is ``normal". So the assumption is implicit when $n=2$.
\end{remark}

\begin{remark}In the statement of Theorem \ref{main}, we also mentioned the assumption $\liminf_{|x|\rightarrow \infty}R_g(x)\geq 0$ can replace
the assumption on ``normal metric". This is because by a maximum principal argument, $\liminf_{|x|\rightarrow \infty}R_g(x)\geq 0$ implies the metric is normal. See for example \cite[Theorem 4.1]{CQY1} for the proof.
\end{remark}
\begin{remark}In fact, by a similar argument, one can even show $e^{nu}$ is a stronger $A_\infty$ weight (see \cite[Definition 5.1]{S2} for the definition), which is a stronger conclusion than being a strong $A_\infty$ weight.
 However, for the purpose of the present paper, there is no need to get into the details of this point.
\end{remark}

\begin{bibdiv}
\begin{biblist}

\bib{Spyros}{article}{
AUTHOR = {Alexakis, Spyros},
     TITLE = {On the decomposition of global conformal invariants. {I}},
   JOURNAL = {Ann. of Math. (2)},
  FJOURNAL = {Annals of Mathematics. Second Series},
    VOLUME = {170},
      YEAR = {2009},
    NUMBER = {3},
     PAGES = {1241--1306},
      ISSN = {0003-486X},
     CODEN = {ANMAAH},
   MRCLASS = {53C20 (53A30)},
  MRNUMBER = {2600873 (2011g:53063)},
MRREVIEWER = {A. Rod Gover},
       URL = {http://dx.doi.org/10.4007/annals.2009.170.1241},
}

\bib{Aubin}{article}{
AUTHOR = {Aubin, Thierry},
     TITLE = {Fonction de {G}reen et valeurs propres du laplacien},
   JOURNAL = {J. Math. Pures Appl. (9)},
  FJOURNAL = {Journal de Math\'ematiques Pures et Appliqu\'ees. Neuvi\`eme
              S\'erie},
    VOLUME = {53},
      YEAR = {1974},
     PAGES = {347--371},
      ISSN = {0021-7824},
   MRCLASS = {58G99 (53C20)},
  MRNUMBER = {0358875 (50 \#11334)},
MRREVIEWER = {J. Eells},
}
\bib{BHS}{article}{
AUTHOR = {Bonk, Mario} AUTHOR ={ Heinonen, Juha} AUTHOR ={Saksman, Eero},
     TITLE = {The quasiconformal {J}acobian problem},
 BOOKTITLE = {In the tradition of {A}hlfors and {B}ers, {III}},
    SERIES = {Contemp. Math.},
    VOLUME = {355},
     PAGES = {77--96},
 PUBLISHER = {Amer. Math. Soc.},
   ADDRESS = {Providence, RI},
      YEAR = {2004},
   MRCLASS = {30C65},
  MRNUMBER = {2145057 (2006d:30026)},
MRREVIEWER = {Matti Vuorinen},
       }

\bib{BHS2}{article}{
AUTHOR = {Bonk, Mario} AUTHOR ={ Heinonen, Juha} AUTHOR ={Saksman, Eero},
     TITLE = {Logarithmic potentials, quasiconformal flows, and
              {$Q$}-curvature},
   JOURNAL = {Duke Math. J.},
  FJOURNAL = {Duke Mathematical Journal},
    VOLUME = {142},
      YEAR = {2008},
    NUMBER = {2},
     PAGES = {197--239},
      ISSN = {0012-7094},
     CODEN = {DUMJAO},
   MRCLASS = {30C65 (53C21)},
  MRNUMBER = {2401620 (2009g:30024)},
MRREVIEWER = {Leonid V. Kovalev},
      }


\bib{Branson}{article}{
AUTHOR = {Branson, Thomas P.},
     TITLE = {Sharp inequalities, the functional determinant, and the
              complementary series},
   JOURNAL = {Trans. Amer. Math. Soc.},
  FJOURNAL = {Transactions of the American Mathematical Society},
    VOLUME = {347},
      YEAR = {1995},
    NUMBER = {10},
     PAGES = {3671--3742},
      ISSN = {0002-9947},
     CODEN = {TAMTAM},
   MRCLASS = {58G26 (22E46 53A30)},
  MRNUMBER = {1316845 (96e:58162)},
MRREVIEWER = {Friedbert Pr{\"u}fer},
       URL = {http://dx.doi.org/10.2307/2155203},
}

\bib{Cantor}{article}{
AUTHOR = {Cantor, M.},
     TITLE = {Sobolev inequalities for {R}iemannian bundles},
   JOURNAL = {Bull. Amer. Math. Soc.},
  FJOURNAL = {Bulletin of the American Mathematical Society},
    VOLUME = {80},
      YEAR = {1974},
     PAGES = {239--243},
      ISSN = {0002-9904},
   MRCLASS = {58D99 (46E35)},
  MRNUMBER = {0372915 (51 \#9119)},
MRREVIEWER = {R. S. Palais},
}

\bib{CQY1}{article}{
AUTHOR = {Chang, Sun-Yung Alice} AUTHOR =  {Qing, Jie} AUTHOR = { Yang, Paul}
     TITLE = {On the {C}hern-{G}auss-{B}onnet integral for conformal metrics
              on {$\bold R^4$}},
   JOURNAL = {Duke Math. J.},
  FJOURNAL = {Duke Mathematical Journal},
    VOLUME = {103},
      YEAR = {2000},
    NUMBER = {3},
     PAGES = {523--544},
      ISSN = {0012-7094},
     CODEN = {DUMJAO},
   MRCLASS = {53C65},
  MRNUMBER = {1763657 (2001d:53083)},
MRREVIEWER = {John Urbas},
       }

\bib{CQY2}{article}{
AUTHOR = {Chang, Sun-Yung Alice} AUTHOR =  {Qing, Jie} AUTHOR = {Yang, Paul}
     TITLE = {Compactification of a class of conformally flat 4-manifold},
   JOURNAL = {Invent. Math.},
  FJOURNAL = {Inventiones Mathematicae},
    VOLUME = {142},
      YEAR = {2000},
    NUMBER = {1},
     PAGES = {65--93},
      ISSN = {0020-9910},
     CODEN = {INVMBH},
   MRCLASS = {53C21 (58J60)},
  MRNUMBER = {1784799 (2001m:53061)},
MRREVIEWER = {Robert McOwen},
       }

\bib{Cohn-Vossen}{article}{
AUTHOR = {Cohn-Vossen, Stefan},
     TITLE = {K\"urzeste {W}ege und {T}otalkr\"ummung auf {F}l\"achen},
   JOURNAL = {Compositio Math.},
  FJOURNAL = {Compositio Mathematica},
    VOLUME = {2},
      YEAR = {1935},
     PAGES = {69--133},
      ISSN = {0010-437X},
     CODEN = {CMPMAF},
   MRCLASS = {Contributed Item},
  MRNUMBER = {1556908},
      }

\bib{Saloff}{article}
{AUTHOR = {Coulhon, Thierry} AUTHOR={Saloff-Coste, Laurent},
     TITLE = {Isop\'erim\'etrie pour les groupes et les vari\'et\'es},
   JOURNAL = {Rev. Mat. Iberoamericana},
  FJOURNAL = {Revista Matem\'atica Iberoamericana},
    VOLUME = {9},
      YEAR = {1993},
    NUMBER = {2},
     PAGES = {293--314},
      ISSN = {0213-2230},
   MRCLASS = {58G99},
  MRNUMBER = {1232845 (94g:58263)},
MRREVIEWER = {Robert Brooks},
       URL = {http://dx.doi.org/10.4171/RMI/138},
}
\bib{Croke}{article}{
AUTHOR = {Croke, Christopher B.},
     TITLE = {A sharp four-dimensional isoperimetric inequality},
   JOURNAL = {Comment. Math. Helv.},
  FJOURNAL = {Commentarii Mathematici Helvetici},
    VOLUME = {59},
      YEAR = {1984},
    NUMBER = {2},
     PAGES = {187--192},
      ISSN = {0010-2571},
     CODEN = {COMHAX},
   MRCLASS = {53C65 (52A40)},
  MRNUMBER = {749103 (85f:53060)},
MRREVIEWER = {R. Osserman},
       }


\bib{DS1}{article}{
AUTHOR = {David, Guy} AUTHOR ={ Semmes, Stephen},
     TITLE = {Strong {$A_\infty$} weights, {S}obolev inequalities and
              quasiconformal mappings},
 BOOKTITLE = {Analysis and partial differential equations},
    SERIES = {Lecture Notes in Pure and Appl. Math.},
    VOLUME = {122},
     PAGES = {101--111},
 PUBLISHER = {Dekker},
   ADDRESS = {New York},
      YEAR = {1990},
   MRCLASS = {30C65 (42B20)},
  MRNUMBER = {1044784 (91c:30037)},
MRREVIEWER = {Michel Zinsmeister},
}

\bib{FeffermanGraham}{article}{
AUTHOR = {Fefferman, Charles} AUTHOR={Graham, C. Robin},
     TITLE = {The ambient metric},
    SERIES = {Annals of Mathematics Studies},
    VOLUME = {178},
 PUBLISHER = {Princeton University Press},
   ADDRESS = {Princeton, NJ},
      YEAR = {2012},
     PAGES = {x+113},
      ISBN = {978-0-691-15313-1},
   MRCLASS = {53A30 (53A55 53C20)},
  MRNUMBER = {2858236},
MRREVIEWER = {Michael G. Eastwood},
}

\bib{Fiala}{article}{
   AUTHOR = {Fiala, F.},
     TITLE = {Le probl\`eme des isop\'erim\`etres sur les surfaces ouvertes
              \`a courbure positive},
   JOURNAL = {Comment. Math. Helv.},
  FJOURNAL = {Commentarii Mathematici Helvetici},
    VOLUME = {13},
      YEAR = {1941},
     PAGES = {293--346},
      ISSN = {0010-2571},
   MRCLASS = {52.0X},
  MRNUMBER = {0006422 (3,301b)},
MRREVIEWER = {J. J. Stoker},
 }

\bib{Finn}{article}{
AUTHOR = {Finn, Robert},
     TITLE = {On a class of conformal metrics, with application to
              differential geometry in the large},
   JOURNAL = {Comment. Math. Helv.},
  FJOURNAL = {Commentarii Mathematici Helvetici},
    VOLUME = {40},
      YEAR = {1965},
     PAGES = {1--30},
      ISSN = {0010-2571},
   MRCLASS = {53.25},
  MRNUMBER = {0203618 (34 \#3467)},
MRREVIEWER = {T. Klotz},
}

\bib{GrahamZworski}{article}{
AUTHOR = {Graham, C. Robin} AUTHOR={Zworski, Maciej},
     TITLE = {Scattering matrix in conformal geometry},
   JOURNAL = {Invent. Math.},
  FJOURNAL = {Inventiones Mathematicae},
    VOLUME = {152},
      YEAR = {2003},
    NUMBER = {1},
     PAGES = {89--118},
      ISSN = {0020-9910},
     CODEN = {INVMBH},
   MRCLASS = {58J50},
  MRNUMBER = {1965361 (2004c:58064)},
MRREVIEWER = {Andrew W. Hassell},
       URL = {http://dx.doi.org/10.1007/s00222-002-0268-1},
}
\bib{GurskyViaclovsky}{article}{
    AUTHOR = {Gursky, Matthew J.} Author={Viaclovsky, Jeff A.},
     TITLE = {A fully nonlinear equation on four-manifolds with positive
              scalar curvature},
   JOURNAL = {J. Differential Geom.},
  FJOURNAL = {Journal of Differential Geometry},
    VOLUME = {63},
      YEAR = {2003},
    NUMBER = {1},
     PAGES = {131--154},
      ISSN = {0022-040X},
     CODEN = {JDGEAS},
   MRCLASS = {53C21 (53C20 58J60)},
  MRNUMBER = {2015262 (2004h:53052)},
MRREVIEWER = {John Urbas},
       URL = {http://projecteuclid.org/getRecord?id=euclid.jdg/1080835660},
}

\bib{Huber}{article}{
AUTHOR = {Huber, Alfred},
     TITLE = {On subharmonic functions and differential geometry in the
              large},
   JOURNAL = {Comment. Math. Helv.},
  FJOURNAL = {Commentarii Mathematici Helvetici},
    VOLUME = {32},
      YEAR = {1957},
     PAGES = {13--72},
      ISSN = {0010-2571},
   MRCLASS = {30.00 (31.00)},
  MRNUMBER = {0094452 (20 \#970)},
MRREVIEWER = {E. F. Beckenbach},
}

\bib{Jones}{article}{
AUTHOR = {Jones, Peter W.},
     TITLE = {Factorization of {$A_{p}$} weights},
   JOURNAL = {Ann. of Math. (2)},
  FJOURNAL = {Annals of Mathematics. Second Series},
    VOLUME = {111},
      YEAR = {1980},
    NUMBER = {3},
     PAGES = {511--530},
      ISSN = {0003-486X},
     CODEN = {ANMAAH},
   MRCLASS = {46E30 (42B99 44A15 46M35)},
  MRNUMBER = {577135 (82b:46035)},
MRREVIEWER = {B. Muckenhoupt},
       URL = {http://dx.doi.org/10.2307/1971107},
}


\bib{Kleiner}{article}{
AUTHOR = {Kleiner, Bruce},
     TITLE = {An isoperimetric comparison theorem},
   JOURNAL = {Invent. Math.},
  FJOURNAL = {Inventiones Mathematicae},
    VOLUME = {108},
      YEAR = {1992},
    NUMBER = {1},
     PAGES = {37--47},
      ISSN = {0020-9910},
     CODEN = {INVMBH},
   MRCLASS = {53C20 (49Q20)},
  MRNUMBER = {1156385 (92m:53056)},
MRREVIEWER = {Viktor Schroeder},
      }

\bib{La}{article}{
AUTHOR = {Laakso, Tomi J.},
     TITLE = {Plane with {$A_\infty$}-weighted metric not bi-{L}ipschitz
              embeddable to {${\Bbb R}^N$}},
   JOURNAL = {Bull. London Math. Soc.},
  FJOURNAL = {The Bulletin of the London Mathematical Society},
    VOLUME = {34},
      YEAR = {2002},
    NUMBER = {6},
     PAGES = {667--676},
      ISSN = {0024-6093},
     CODEN = {LMSBBT},
   MRCLASS = {30C62 (28A80 30C65)},
  MRNUMBER = {1924353 (2003h:30029)},
MRREVIEWER = {Vasily A. Chernecky},
       URL = {http://dx.doi.org/10.1112/S0024609302001200},
}


\bib{S2}{article}{
AUTHOR = {Semmes, Stephen},
     TITLE = {Bi-{L}ipschitz mappings and strong {$A_\infty$} weights},
   JOURNAL = {Ann. Acad. Sci. Fenn. Ser. A I Math.},
  FJOURNAL = {Annales Academiae Scientiarum Fennicae. Series A I.
              Mathematica},
    VOLUME = {18},
      YEAR = {1993},
    NUMBER = {2},
     PAGES = {211--248},
      ISSN = {0066-1953},
     CODEN = {AAFMAT},
   MRCLASS = {30C99},
  MRNUMBER = {1234732 (95g:30032)},
MRREVIEWER = {S. K. Vodop{\cprime}yanov},
}

\bib{S3}{article}{
 AUTHOR = {Semmes, Stephen},
     TITLE = {On the nonexistence of bi-{L}ipschitz parameterizations and
              geometric problems about {$A_\infty$}-weights},
   JOURNAL = {Rev. Mat. Iberoamericana},
  FJOURNAL = {Revista Matem\'atica Iberoamericana},
    VOLUME = {12},
      YEAR = {1996},
    NUMBER = {2},
     PAGES = {337--410},
      ISSN = {0213-2230},
   MRCLASS = {30C65 (42B20)},
  MRNUMBER = {1402671 (97e:30040)},
MRREVIEWER = {Guy David},
       URL = {http://dx.doi.org/10.4171/RMI/201},
}

\bib{Stein}{book}{
   AUTHOR = {Stein, Elias M.},
     TITLE = {Harmonic analysis: real-variable methods, orthogonality, and
              oscillatory integrals},
    SERIES = {Princeton Mathematical Series},
    VOLUME = {43},
      NOTE = {With the assistance of Timothy S. Murphy,
              Monographs in Harmonic Analysis, III},
 PUBLISHER = {Princeton University Press},
   ADDRESS = {Princeton, NJ},
      YEAR = {1993},
     PAGES = {xiv+695},
      ISBN = {0-691-03216-5},
   MRCLASS = {42-02 (35Sxx 43-02 47G30)},
  MRNUMBER = {1232192 (95c:42002)},
MRREVIEWER = {Michael Cowling}
}

\bib{Varopoulos}{article}{
AUTHOR = {Varopoulos, N. Th.},
     TITLE = {Small time {G}aussian estimates of heat diffusion kernels.
              {I}. {T}he semigroup technique},
   JOURNAL = {Bull. Sci. Math.},
  FJOURNAL = {Bulletin des Sciences Math\'ematiques},
    VOLUME = {113},
      YEAR = {1989},
    NUMBER = {3},
     PAGES = {253--277},
      ISSN = {0007-4497},
     CODEN = {BSMQA9},
   MRCLASS = {58G11 (35K05 47D05 60J35)},
  MRNUMBER = {1016211 (90k:58216)},
MRREVIEWER = {Ana Bela Cruzeiro},
}

\bib{Wang}{article}{
AUTHOR = {Wang, Yi},
     TITLE = {The isoperimetric inequality and quasiconformal maps on
              manifolds with finite total {$Q$}-curvature},
   JOURNAL = {Int. Math. Res. Not. IMRN},
  FJOURNAL = {International Mathematics Research Notices. IMRN},
      YEAR = {2012},
    NUMBER = {2},
     PAGES = {394--422},
      ISSN = {1073-7928},
   MRCLASS = {53C21 (53C20)},
  MRNUMBER = {2876387},
MRREVIEWER = {Joseph E. Borzellino},
}
	
\bib{Weil}{article}{
AUTHOR =  {Weil, A.},
     TITLE = {Sur les surfaces ид courbure negative},
   JOURNAL = {C. R. Acad. Sci., Paris},
      YEAR = {1926},
    NUMBER = {182},
     PAGES = {1069--1071},
}
\end{biblist}
\end{bibdiv}
\end{document}